\DeclareMathOperator{\HS}{\mathrm{HS}}
\DeclareMathOperator{\rank}{\mathrm{rank}}
\DeclareMathOperator{\sgn}{\mathrm{sgn}}
\newtheorem{theorem}{Theorem}[section]
\newtheorem{corollary}[theorem]{Corollary}
\newtheorem{lemma}[theorem]{Lemma}
\newtheorem{conjecture}[theorem]{Conjecture}
\newtheorem{proposition}[theorem]{Proposition}
\newtheorem{question}[theorem]{Question}
\theoremstyle{definition}
\newtheorem{definition}[theorem]{Definition}
\newtheorem{example}[theorem]{Example}
\newtheorem{remark}[theorem]{Remark}
\title[Max rank properties for symmetric polynomials in a monomial CI]{On maximal rank properties for symmetric polynomials in an equigenerated monomial complete intersection}
\author{Filip Jonsson Kling and Samuel Lundqvist}
 \address{Filip Jonsson Kling, Department of Mathematics, Stockholm University, 106 91 Stockholm, 
 Sweden}
 \email{filip.jonsson.kling@math.su.se}
 \address{Samuel Lundqvist, Department of Mathematics, Stockholm University, 106 91 Stockholm, 
 Sweden}
 \email{samuel@math.su.se}
\begin{document}

\begin{abstract}
It is well known that a monomial complete intersection has the strong Lefschetz property in characteristic zero. This property is equivalent to the statement that any power of the sum of the variables is a maximal rank element on the complete intersection. In this paper, we investigate what happens when this element is replaced by another symmetric polynomial, in an equigenerated complete intersection.

We answer the question completely for the power sum symmetric polynomial using a grading technique, and for any Schur polynomial in the case of two variables by deriving a closed formula for the determinants of a family of Toeplitz matrices. Further, we obtain partial results in three or more variables for the elementary and the complete homogeneous symmetric polynomials and pose several open questions.
\end{abstract}

\maketitle

\section{Introduction}

The Lefschetz properties have recently been a popular area of study in commutative algebra and related fields. 
If $A$ is a graded Artinian algebra, then $A$ has the weak Lefschetz property if there is a linear form $\ell\in A_1$ such that the multiplication map
\[
\cdot \ell: A_i \to A_{i+1}
\]
has maximal rank, that is, is injective or surjective, for all $i$. If also all powers of $\ell$ give multiplication maps that have maximal rank, we say that $A$ has the strong Lefschetz property. The foundational example of an algebra satisfying the strong Lefschetz property is the Artinian monomial complete intersection $\mathbf{k}[x_1,\dots, x_n]/(x_1^{d_1},\dots, x_n^{d_n})$ over a field $\mathbf{k}$ of characteristic zero, proven independently by Stanley \cite{Stanley1980WeylGT} and Watanabe \cite{WATANABE1989194}. In their work, they show that multiplication by the element $x_1+\cdots + x_n$ and its powers always has maximal rank. A natural question is then if multiplication by other polynomials also give maps that have maximal rank. 

The polynomial $x_1+\cdots + x_n$ is the first element of many well studied families of symmetric polynomials. It can be seen as $p_1$, a power sum symmetric polynomial, or $h_1$, a complete symmetric polynomial, or $e_1$, an elementary symmetric polynomial, or as $s_{(1,0,\dots, 0)}$, a Schur polynomial. All of these families of symmetric polynomials have members of higher degrees that are different from just powers of $x_1+\cdots + x_n$. 

The main goal of this article is 
to initiate the study of which of these symmetric polynomials give maps that have maximal rank on which monomial complete intersections, and which that do not. We restrict to the equigenerated case $d=d_1=\ldots = d_n$. Our study connects to previous results and opens questions about algebras fixed by the action of the symmetric group $S_n$. 

Conca, Krattenthaler, and Watanabe \cite{Conca2009} asked when the ideals generated by $n$ power sum symmetric polynomials, or by $n$ complete symmetric polynomials, defines complete intersections. Up to this day, even the case $n=3$ remains unclassified. 

Haglund, Rhoades, and Shimozono \cite{HAGLUND2018851} studied the ideals $(x_1^d,\ldots,x_n^d) + I_{n,d}$ in connection with the Delta conjecture, where $I_{n,d}$ is generated by the elementary symmetric polynomials of degree $n-d+1,n-d+2,\ldots,n$, and managed to determine their Gr\"obner bases, providing a connection to Demazure characters. 

Kustin, Rahmati, and Vraciu \cite{KUSTIN2012256} studied the resolution of $(x_1^d,x_2^d,x_3^d)$ in the hypersurface ring $k[x_1,x_2,x_3]/(x_1^N + x_2^N + x_3^N)$, and Kustin, R.G. and Vraciu \cite{MR4332033,MR4506076} considered the corresponding question for $n = 4$. There are also results in characteristic $p$, where  Miller, Rahmati, and R. G. \cite{Miller-Rahmati-RG} studied the structure of the Betti numbers for $n = 3$. 

Maximal rank questions for symmetric polynomials have also been considered before, where for example Moreno-Soc{\'i}as and Snellman \cite{Snellman2000SomeCA} showed that any even degree complete symmetric polynomial is a maximal rank element in the exterior algebra. 

The 
paper is structured as follows. In Section $2$ we give some preliminaries on the algebraic tools used. Section $3$ then concerns the power sum symmetric polynomials. Using a finer grading on the polynomials involved, we give a full characterization for when they define maximal rank elements in Theorem \ref{thm:most_failures}. Next, in Section $4$, we examine Schur polynomials. Here we determine a closed formula for the determinants of a family of Toeplitz matrices in Theorem \ref{thm:Toeplitz_det_formula} and use that to say when Schur polynomials in two variables are maximal rank elements in Theorem \ref{thm:Schur_n=2}. Some results on Schur polynomials in more variables are also established. In Section $5$ and $6$, we give conjectures for when the elementary and the complete symmetric polynomials of degree $d$ are maximal rank elements, and use Macaulay's inverse system to establish partial results. 
Then, in Section $7$, we collect some further observations and conjectures for future work. Finally, as a resource for the more combinatorially inclined reader, at the end we include an appendix on Artinian Gorenstein algebras with results on multiplications in such algebras that we were unable to locate elsewhere.

\section{Algebraic preliminaries}

Throughout this paper $\mathbf{k}$ will denote a field of characteristic zero. We will consider standard graded Artinian monomial complete intersections
$$A=\mathbf{k}[x_1,\dots, x_n]/(x_1^{d},\dots, x_n^{d}).$$
The Hilbert series of a graded algebra $R$, denoted $\HS(R;t)$, is the formal powers series that encodes the dimension of its graded pieces, so $\HS(R;t) = \sum_i \dim_{\mathbf{k}} (R_i) t^i$. It is well known that 
$$\HS(A;t)=\frac{(1-t^{d})^n}{(1-t)^n} = (1+t+\cdots+t^{d-1})^n.$$ In particular, the Hilbert series of $A$ is a polynomial of degree $n(d-1)$, called the socle degree of $A$, which is symmetric along $(n(d-1))/2$. 

We say that a form $f \in A_k$ is a \emph{maximal rank element}, or max-rank element for short, on $A$ if for any $i$, the multiplication map $\cdot f: A_i \to A_{i+k}$ is either injective or surjective. Equivalently, $f$ is a max-rank element on $A$ if the Hilbert series of $A/(f)$ equals \[[(1-t^k) \HS(A;t)],\] 
where the brackets means truncate at the first non-positive entry. 

Since $A$ is Gorenstein, an element $f \in A_k$ is a maximal rank element if and only if the multiplication by $f$ map from $A_j$ to $A_{j+k}$, for the largest $j$ such that $\dim_{\mathbf{k}} A_j \leq \dim_{\mathbf{k}} A_{j+k}$, is injective, see Lemma \ref{lem:inj_sufficient}. This is a property that will be used throughout the paper. Moreover, the difference 
\[ \HS(A/(f);t) - [(1-t^k) \HS(A;t)]  \] is symmetric, see  Proposition \ref{prop:Failing_symmetric}.

Macaulay's inverse system will be of use to determine vector space dimensions in Section $5$ and Section $6$. Let $R=\mathbf{k}[x_1,\ldots,x_n]$ and consider the dual $S=k[X_1,\ldots,X_n]$, where $x_i$ acts on $F \in S$ as $x_i \circ  F = \frac{\partial F}{\partial X_i}$. For an ideal $I=(f_1,\ldots,f_r) \subset R$, the inverse system, denoted $I^{-1}$, is the submodule annihilated by $f_1,\ldots,f_r$ under this action, and 
\[
\dim_{\mathbf{k}} (I^{-1})_d =
\dim_{\mathbf{k}} (R/I)_d. \]

\section{The power sum symmetric polynomial}

The goal for this section is to give a full classification for when the power sum symmetric polynomial
\[
p_{k,n} = x_1^k  + \cdots + x_n^k
\]
is a max-rank element on  
\[A=\mathbf{k}[x_1,\dots, x_n]/(x_1^d,\dots, x_n^d).\] As a first observation we note that $p_{k,n}=0$ in $A$ if $k\geq d$. Hence $p_{k,n}$ will fail to be a max-rank element for trivial reasons when $d\leq k \leq n(d-1)$, and will be a max-rank element for trivial reasons when $k>n(d-1)$, since all maps are expected to be the zero map only in the latter case. Therefore we may assume that $k<d$ for the remainder of this section. 

A crucial part in our argument is that we can use a finer grading on our monomials, as previously considered in \cite{BACKELIN20153158,MR4506076}. 

\begin{definition}
A monomial $x_1^{j_1}\cdots x_n^{j_n}$ has $k$\emph{-degree} $(d;a_1,a_2,\dots, a_n)$ if $d=\sum_{i=1}^nj_i$, $a_i\equiv j_i \pmod{k}$ and $0\leq a_i<k$ for all $i$. 
\end{definition}

The important thing to note about this grading for our purposes is that multiplication by $p_{k,n}$ respects it in the following sense. Let $\mathbf{a}=(a_1,a_2,\dots, a_n)$ and $m$ be a monomial of $k$-degree $(i;\mathbf{a})$. Then $p_{k,n}\cdot m$ has $k$-degree $(i+k;\mathbf{a})$. Hence the map $\cdot p_{n,k}: A_i \to A_{i+k}$ splits up as a direct sum
\[
\cdot p_{n,k}: \bigoplus_{a_1=0}^{k-1}\cdots \bigoplus_{a_n=0}^{k-1} A_{i;a_1,\dots, a_n} \to \bigoplus_{a_1=0}^{k-1}\cdots \bigoplus_{a_n=0}^{k-1} A_{i+k;a_1,\dots, a_n}
\]
where 
\[
\left(\cdot p_{n,k}\right) \left(A_{i;a_1,\dots, a_n} \right) \subseteq A_{i+k;a_1,\dots, a_n}.
\]
Therefore, when checking if $\cdot p_{n,k}: A_i \to A_{i+k}$ is injective, it is equivalent to checking if all maps $\cdot p_{n,k} :A_{i;a_1,\dots, a_n} \to A_{i+k;a_1,\dots, a_n}$ are injective.

Next, when considering an element of $A_{i;a_1,\dots, a_n}$, we may write it as $x_1^{a_1}\cdots x_n^{a_n}\cdot f(x_1^k,\dots, x_n^k)$ for some polynomial $f$. By changing the algebra we work in, it suffices to study elements of the form $f(x_1^k,\dots, x_n^k)$.

\begin{lemma}\label{lem:first_reduction}
An element $x_1^{a_1}\cdots x_n^{a_n}\cdot f(x_1^k,\dots, x_n^k)$ is a non-zero element in the kernel of multiplication by $p_{n,k}$ in $A=\mathbf{k}[x_1,\dots, x_n]/(x_1^d,\dots, x_n^d)$ if and only if $f(x_1^k,\dots, x_n^k)$ is a non-zero element in the kernel of multiplication by $p_{n,k}$ in $B=\mathbf{k}[x_1,\dots, x_n]/(x_1^{d-a_1},\dots, x_n^{d-a_n})$.
\end{lemma}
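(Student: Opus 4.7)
My plan is to exhibit an explicit $\mathbf{k}$-linear map between (appropriate subspaces of) $B$ and $A$ that is compatible with multiplication by any element of $R=\mathbf{k}[x_1,\dots,x_n]$, in particular by $p_{n,k}$, and then transport kernels along it. Specifically, I would define
\[
\phi: B \longrightarrow A, \qquad x_1^{b_1}\cdots x_n^{b_n} \longmapsto x_1^{a_1+b_1}\cdots x_n^{a_n+b_n},
\]
extended $\mathbf{k}$-linearly on the monomial basis of $B$. The whole lemma is essentially the statement that $\phi$ is an injective $R$-module homomorphism whose image contains $x_1^{a_1}\cdots x_n^{a_n}\cdot f(x_1^k,\dots,x_n^k)$ whenever the source contains $f(x_1^k,\dots,x_n^k)$.

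First I would check that $\phi$ is well-defined: a monomial $x^b$ is nonzero in $B$ precisely when $b_i<d-a_i$ for every $i$, in which case $a_i+b_i<d$ and $x^{a+b}$ is nonzero in $A$. Moreover $\phi$ sends the (nonzero) monomial basis of $B$ to distinct nonzero monomials of $A$, so $\phi$ is injective. Next I would verify that $\phi$ is an $R$-module map, i.e.\ $\phi(r\cdot g)=r\cdot \phi(g)$; by linearity this reduces to $r=x^s$, $g=x^b$, and the check amounts to observing that both $r\cdot g$ in $B$ and $r\cdot\phi(g)$ in $A$ vanish under exactly the same condition $a_i+b_i+s_i\ge d$ for some $i$, and otherwise both equal the same shifted monomial. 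This is the only step that requires genuine (if routine) bookkeeping, and it is the potential pitfall: one must confirm that ``zero in $B$'' and ``zero in the shifted copy inside $A$'' are governed by matching congruence conditions. In fact $\phi$ identifies $B$ with the $k$-graded piece $\bigoplus_i A_{i;a_1,\dots,a_n}$ (up to the overall degree shift by $a_1+\cdots+a_n$), which is the structural content behind the lemma.

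Finally, since $p_{n,k}\in R$, the identity $\phi(p_{n,k}\cdot g)=p_{n,k}\cdot\phi(g)$ gives the conclusion: taking $g=f(x_1^k,\dots,x_n^k)$, we have $\phi(g)=x_1^{a_1}\cdots x_n^{a_n}\cdot f(x_1^k,\dots,x_n^k)$, and by injectivity $g=0$ in $B$ iff $\phi(g)=0$ in $A$, while $p_{n,k}\cdot g=0$ in $B$ iff $p_{n,k}\cdot\phi(g)=0$ in $A$. Combining these two equivalences yields exactly the ``if and only if'' asserted.
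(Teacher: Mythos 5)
Your proposal is correct and is essentially the paper's own argument in different packaging: your map $\phi$ is just multiplication by $x_1^{a_1}\cdots x_n^{a_n}$ regarded as a map $B\to A$, and its well-definedness, injectivity and $R$-linearity encode precisely the colon-ideal fact $x_l\cdot g\in(x_1^{s_1},\dots,x_n^{s_n})\iff g\in(x_1^{s_1},\dots,x_l^{s_l-1},\dots,x_n^{s_n})$ that the paper applies repeatedly. The monomial bookkeeping you flag as the only real step is exactly that fact, so there is no gap.
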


\begin{proof}
The lemma follows by repeated applications of the fact that $$x_l\cdot g \in (x_1^{s_1},\dots, x_n^{s_n})$$ for some integers $l,s_1,\dots, s_n$ and polynomial $g$ if and only if $$g\in (x_1^{s_1},\dots, x_{l-1}^{s_{l-1}}, x_{l}^{s_{l}-1}, x_{l+1}^{s_{l+1}}, \dots, x_n^{s_n}).$$ Taking $g=p_{n,k}\cdot f(x_1^k,\dots, x_n^k)$ then gives the equivalence of being a kernel element and $g=f(x_1^k,\dots, x_n^k)$ gives the equivalence of being non-zero.
\end{proof}

For the next step in our chain of reductions, it is again indirectly the finer grading that will help us out.

\begin{lemma}\label{lem:only_mult_of_k_matters}
Write $d=km+i$ for some $1\leq i \leq k$ and assume without loss of generality that $a_1,\dots, a_j<i$ while $a_{j+1},\dots, a_n\geq i$. Then $f(x_1^k,\dots, x_n^k)$ is a non-zero element in the kernel of multiplication by $p_{n,k}$ in the algebra $B=\mathbf{k}[x_1,\dots, x_n]/(x_1^{d-a_1},\dots, x_n^{d-a_n})$ if and only if it is a non-zero element in the kernel of multiplication by $p_{n,k}$ in $C=\mathbf{k}[x_1,\dots, x_n]/(x_1^{km+k},\dots, x_j^{km+k}, x_{j+1}^{km},\dots, x_n^{km})$. 
\end{lemma}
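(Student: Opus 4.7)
The plan is to observe that $f(x_1^k,\dots,x_n^k)$ and its product with $p_{n,k}$ are, in both cases, $\mathbf{k}$-linear combinations of monomials in which each variable appears only to an exponent divisible by $k$; for such monomials, divisibility by a pure power $x_l^{d-a_l}$ coincides exactly with divisibility by the pure power of $x_l$ appearing in the defining ideal of $C$. Since both defining ideals are monomial ideals, a polynomial belongs to them if and only if every one of its monomials does, so the two nonvanishing conditions will match, as will the two kernel conditions.

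Concretely, I would proceed as follows. Let $I=(x_1^{d-a_1},\dots,x_n^{d-a_n})$ and $J=(x_1^{km+k},\dots,x_j^{km+k},x_{j+1}^{km},\dots,x_n^{km})$, both monomial ideals. I would then show the key claim: for any monomial $M=x_1^{kb_1}\cdots x_n^{kb_n}$ with $b_l\geq 0$, one has $M\in I$ if and only if $M\in J$. For this, fix $l$. If $l\leq j$ then $a_l<i$, so $d-a_l=km+(i-a_l)$ with $1\leq i-a_l\leq k$, hence $d-a_l\in[km+1,km+k]$. Thus $kb_l\geq d-a_l$ forces $b_l\geq m+1$, i.e. $kb_l\geq km+k$, and conversely $km+k\geq d-a_l$, so divisibility by $x_l^{d-a_l}$ is equivalent to divisibility by $x_l^{km+k}$. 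If $l>j$ then $a_l\geq i$ and $a_l<k$, so $d-a_l=km-(a_l-i)$ with $0\leq a_l-i<k$, hence $d-a_l\in(km-k,km]$, and the same argument shows $kb_l\geq d-a_l$ iff $b_l\geq m$, so divisibility by $x_l^{d-a_l}$ is equivalent to divisibility by $x_l^{km}$. Taking the disjunction over $l$ gives the claim.

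I would then apply the claim twice. First, applying it to $f(x_1^k,\dots,x_n^k)$ (whose monomials have each variable to a multiple of $k$) yields that $f(x_1^k,\dots,x_n^k)$ is nonzero in $B$ iff it is nonzero in $C$. Second, since $p_{n,k}\cdot f(x_1^k,\dots,x_n^k)=\sum_l x_l^k\cdot f(x_1^k,\dots,x_n^k)$ also has every monomial with each variable to a multiple of $k$, applying the claim to this product gives that it vanishes in $B$ iff it vanishes in $C$. Together these two equivalences are precisely the statement of the lemma.

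There is no real obstacle here; the only thing that has to be watched is the case split between $a_l<i$ and $a_l\geq i$, which is forced by the need for $d-a_l$ to lie between two consecutive multiples of $k$. Once that arithmetic is written out, both directions of the equivalence fall out at once from the fact that the ideals involved are monomial and that the polynomials under consideration live in the subring $\mathbf{k}[x_1^k,\dots,x_n^k]$ up to a single factor of $p_{n,k}$.
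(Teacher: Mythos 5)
Your proposal is correct and follows essentially the same route as the paper: both proofs reduce to showing that a polynomial all of whose exponents are multiples of $k$ lies in the monomial ideal defining $B$ if and only if it lies in the one defining $C$, by rounding the thresholds $d-a_l$ up to the nearest multiple of $k$ (namely $km+k$ for $l\leq j$ and $km$ for $l>j$), and then applying this to $f(x_1^k,\dots,x_n^k)$ and to $p_{n,k}\cdot f(x_1^k,\dots,x_n^k)$. The only cosmetic difference is that the paper gets one implication for free from the containment $J\subseteq I$, while you verify both directions by the same exponent arithmetic.
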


\begin{proof}
Let $g$ be a polynomial with $k$-degree $(s;0,0,\dots, 0)$, $I$ the defining ideal for $B$ and $J$ the defining ideal for $C$. We want to show that $g\in I$ if and only if $g\in J$. Since $J\subseteq I$, one implication is immediate. For the other direction, assume that $g\in I$. This means that every monomial in $g$ is divisible by $x_l^{d-a_l}$ for some $l$. We now note that since $g$ has $k$-degree $(s;0,0,\dots, 0)$, the exponent of every variable in a monomial in $g$ is divisible by $k$. Hence, if a variable is divisible by $x_l^{d-a_l}$, its exponent must be at least the smallest multiple of $k$ larger than or equal to $d-a_l$, which is $km+k$ if $l\leq j$ and $km$ otherwise. Thus $g\in J$. Finally, taking $g=f(x_1^k,\dots, x_n^k)$ and $g=p_{n,k}\cdot f(x_1^k,\dots, x_n^k)$ gives the statement of the lemma.
\end{proof}

For the next result, we need the function
\begin{equation}\label{eq:H-function}
H(n,k,i,j,m) = 
\begin{cases}
nk+2ij-ni-n-kj-k & \text{ if } mn-(n-j) \text{ is even}\\
nk+2ij-ni-n-kj-2k & \text{ if } mn-(n-j) \text{ is odd}.
\end{cases}
\end{equation}

With these preparations, we are now ready to prove the main result for determining when $p_{k,n}$ is a maximal rank element on $\mathbf{k}[x_1,\dots, x_n]/(x_1^d,\dots, x_n^d)$.

\begin{proposition}\label{prop:Inj_char}
Write $d=km+i$ for some $1\leq i \leq k$ and let $H$ be as defined in \eqref{eq:H-function}. Then $p_{k,n}$ is \emph{not} a max-rank element on $A=\mathbf{k}[x_1,\dots, x_n]/(x_1^d,\dots, x_n^d)$ if and only if there is a $j$ in the interval $0\leq j \leq n$ such that
\[
H(n,k,i,j,m)\geq 0.
\]
\end{proposition}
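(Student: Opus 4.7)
The plan is to combine the two grading reductions of Lemmas \ref{lem:first_reduction} and \ref{lem:only_mult_of_k_matters}, the Stanley--Watanabe strong Lefschetz theorem applied to a smaller monomial complete intersection, and the Gorenstein criterion of Lemma \ref{lem:inj_sufficient}, to pinpoint exactly when the decomposed pieces yield a kernel element of $\cdot p_{k,n}$ at a degree where max-rank on $A$ demands injectivity.

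Fix a $k$-degree index $(a_1,\dots,a_n)$ with $0 \le a_\ell < k$ and let $j$ be the number of $\ell$ with $a_\ell < i$. By the two reduction lemmas, under the substitution $y_\ell = x_\ell^k$, a kernel element of $\cdot p_{k,n}$ in the piece $A_{e;a_1,\dots,a_n}$ is the same data as a kernel element of $\cdot \sigma$, where $\sigma := y_1 + \cdots + y_n$, at $y$-degree $D = (e - \sum a_\ell)/k$ in
\[
C'_j := \mathbf{k}[y_1,\dots,y_n]/(y_1^{m+1},\dots,y_j^{m+1},y_{j+1}^m,\dots,y_n^m).
\]
Since $C'_j$ is a monomial complete intersection, Stanley--Watanabe gives that $\cdot\sigma$ has maximal rank, so it fails to be injective at degree $D$ iff $\dim_{\mathbf{k}}(C'_j)_D > \dim_{\mathbf{k}}(C'_j)_{D+1}$. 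Invoking the symmetry and strict unimodality of the Hilbert function of $C'_j$ about the midpoint $s_j/2$ of its socle degree $s_j = mn - (n-j)$, this happens iff $D \ge D_{\min}(j)$, where $D_{\min}(j) = s_j/2$ for $s_j$ even and $(s_j+1)/2$ for $s_j$ odd.

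Next I would invoke the Gorenstein criterion: $p_{k,n}$ fails to be a max-rank element iff $\cdot p_{k,n}\colon A_e \to A_{e+k}$ is non-injective at some $e$ with $\dim_{\mathbf{k}} A_e \le \dim_{\mathbf{k}} A_{e+k}$, which by the symmetry of $\HS(A;t)$ about $n(d-1)/2$ reads $2e + k \le n(d-1)$. For a fixed piece index $j$, the smallest $e$ admitting such a failure is $e_{\min}(j) = k\,D_{\min}(j) + (n-j)i$: take $D = D_{\min}(j)$ and minimize $\sum a_\ell = (n-j)i$ via $a_1 = \cdots = a_j = 0$ and $a_{j+1} = \cdots = a_n = i$ (valid for $i \le k-1$; for $i = k$ only $j = n$ is admissible, and the formula still applies). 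Substituting $d = km + i$ into $2 e_{\min}(j) + k \le n(d-1)$ and splitting on the parity of $s_j$, routine algebra reduces this to $H(n,k,i,j,m) \ge 0$ in each of the two branches of \eqref{eq:H-function}.

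The main obstacle is threading the parity of $s_j$ through the chain of equivalences: the $-k$ versus $-2k$ discrepancy in the two branches of $H$ corresponds exactly to the $+\tfrac12$ shift in $D_{\min}(j)$ when $s_j$ flips from even to odd, and the parity of $s_j = mn - (n-j)$ is precisely what the definition of $H$ uses to branch. A smaller point is the edge case $i = k$, where only $j = n$ is a genuinely admissible piece; a short calculation gives $H(n,k,k,j+1,m) - H(n,k,k,j,m) \in \{0, 2k\}$, so $H(n,k,k,\cdot,m)$ is non-decreasing and the existence of some $j \in [0,n]$ with $H \ge 0$ forces $j = n$ to satisfy it as well, ruling out spurious solutions.
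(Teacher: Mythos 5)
Your proposal is correct and follows essentially the same route as the paper: the two $k$-degree reduction lemmas, the substitution $y_\ell = x_\ell^k$ into the smaller monomial complete intersection, the Stanley--Watanabe Lefschetz property together with the symmetric unimodal Hilbert function to locate the first kernel degree, the Gorenstein injectivity criterion, and the minimization $\sum a_\ell = (n-j)i$ leading by the same algebra to $H(n,k,i,j,m)\ge 0$. Your additional check that $H(n,k,k,\cdot,m)$ is non-decreasing in $j$ when $i=k$, so that only the genuinely admissible piece $j=n$ matters, is a small point the paper leaves implicit.
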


\begin{proof}
Since $A$ is a complete intersection, and therefore also Gorenstein, we know from Lemma \ref{lem:inj_sufficient} that it suffices to check if there are any unexpected kernel elements for multiplication by $p_{k,n}$. By using the $k$-degree considerations, we may assume that such an element has the form $x_1^{a_1}\cdots x_n^{a_n}\cdot f(x_1^k,\dots, x_n^k)$ for some polynomial $f$. If we assume that $a_1,\dots, a_j<i$ while $a_{j+1},\dots, a_n\geq i$, then Lemma \ref{lem:first_reduction} and Lemma \ref{lem:only_mult_of_k_matters} gives that we may equivalently look for kernel elements $f(x_1^k,\dots, x_n^k)$ in $C=\mathbf{k}[x_1,\dots, x_n]/(x_1^{km+k},\dots, x_j^{km+k}, x_{j+1}^{km},\dots, x_n^{km})$. For a final reduction, we note that $p_{k,n}\cdot f(x_1^k,\dots, x_n^k)$ being zero in $C$ is a statement where we may view $x_1^k,\dots, x_n^k$ as variables. Hence, if we set $y_t=x_t^k$ for all $1\leq t \leq n$, then this is equivalent to $(y_1+\cdots + y_n)\cdot f(y_1,\dots, y_n)$ being zero in $D=\mathbf{k}[y_1,\dots, y_n]/(y_1^{m+1},\dots, y_j^{m+1}, y_{j+1}^{m},\dots, y_n^{m})$. Since $y_1+\cdots + y_n$ is the sum of the variables in the monomial complete intersection $D$, the fact that $D$ has the weak Lefschetz property gives that such a polynomial $f$ appears in a degree $e$ if and only if $\dim_{\mathbf{k}}(D_e)>\dim_{\mathbf{k}}(D_{e+1})$. To determine the lowest such degree $e$, we first calculate the socle degree $s_D$ of $D$ to be
\begin{align*}
s_D&= j\left(m+1 \right) + (n-j)m - n \\
&=mn -(n-j).
\end{align*}
If $s_D$ is even, then we know from the shape of the Hilbert series of $D$ that the lowest possible value for $e$ is $s_D/2$, while it is $\frac{s_D+1}{2}$ if $s_D$ is odd, see \cite{RRR}. So pick an $f$ in the kernel of this degree. Then $f(x_1^k,\dots, x_n^k)$ has degree $ks_D/2$ or $k(s_D+1)/2$ and the lowest degree for $x_1^{a_1}\cdots x_n^{a_n}\cdot f(x_1^k,\dots, x_n^k)$ is therefore $ks_D/2+\sum a_s$ or $k(s_D+1)/2 + \sum a_s$, depending on if $mn -(n-j)$ is even or odd. 

Having found the lowest possible degree of a kernel element, we now need to see if injectivity is expected in that degree or not. Again, from our knowledge of the shape of Hilbert series of $A$, we know that multiplication by a form on $A$ is expected to be injective if the sum of the source degree and the target degree is at most the socle degree. In this case we have the socle degree $s_A=n(d-1) = n(km+i-1)$. When $mn -(n-j)$ is even, we also have the source degree $ks_D/2+\sum a_s$ and target degree $ks_D/2+\sum a_s + k$. Comparing their sum with $s_A$, we find
\begin{equation}\label{eq:inj_equation}
\begin{split}
&(ks_D/2+\sum a_s) + (ks_D/2+\sum a_s + k)\\ 
&= kmn -k(n-j) + 2\left(\sum_{s=1}^na_s \right) + k \\
&=s_A - ni -kn + kj + 2\left(\sum_{s=1}^na_s \right) + k + n.
\end{split}
\end{equation}
Since we are only interested in determining if there is a degree when injectivity is required, we only care about when $\sum_{s=1}^na_s$ is as low as possible, which is when $a_1=\cdots = a_j=0$ and $a_{j+1}=\cdots = a_n=i$. Then $\sum_s a_s = (n-j)i = ni-ji$. In those cases, \eqref{eq:inj_equation} simplifies to 
\begin{align*}
s_A - ni -kn + kj + 2\left(\sum_{s=1}^na_s \right) + k + n &= s_A + ni + kj - kn -2ij + k + n\\ 
&= s_A -H(n,k,i,j,m)
\end{align*}
and thus injectivity is required exactly when $H(n,k,i,j,m)\geq 0$. Finally, a similar computation with $k(s_D+1)/2 + \sum a_s$ in the case when $mn-(n-j)$ is odd also gives required injectivity when $H(n,k,i,j,m)\geq 0$ as desired.
\end{proof}

\begin{remark}
The arguments in the proof of Proposition \ref{prop:Inj_char} and the preceding discussion works equally as well when considering powers $(p_{k,n})^t$ of power sum symmetric polynomials since the algebra $D$ has the strong Lefschetz property. Assuming $k<d$ and $kt\leq n(d-1)$ to avoid trivial edge cases, given the same setup as in Proposition \ref{prop:Inj_char}, we have that $(p_{k,n})^t$ is not a max-rank element on $A$ if and only if there is a $j$ in the interval $0\leq j \leq n$ such that
\begin{align*}
nk+2ij-ni-n-kj-k\geq 0 & \text{ if } mn-(n-j)+t \text{ is odd}\\
nk+2ij-ni-n-kj-2k\geq 0 & \text{ if } mn-(n-j)+t \text{ is even}.
\end{align*}
\end{remark}

Before we give the full classification for when $p_{n,k}$ is a max-rank element, let us apply the above proposition in some specific instances, taking care of some sporadic cases when either $n$ or $k$ is small.

\begin{lemma}
\label{lem:n=2}
The form $p_{k,2}$ fails to have maximal rank on $\mathbf{k}[x,y]/(x^d,y^d)$ exactly in the following cases. If $k=2m$, then it fails in all cases except when $d\equiv m \pmod{k}$, and if $k=2m+1$, then it fails in all cases except when $d\equiv m \pmod{k}$ or $d\equiv m+1 \pmod{k}$. In particular, when $k=2$, it fails when $d$ is even, when $k=3$, it fails when $d$ is divisible by three, and when $k=4$, it fails unless $d\equiv 2 \pmod{4}$.
\end{lemma}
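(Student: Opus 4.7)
The plan is to apply Proposition \ref{prop:Inj_char} directly with $n=2$ and reduce the characterization to an elementary inequality in $i$. Writing $d = km + i$ with $1 \leq i \leq k$ as prescribed, I would first observe that for $n=2$ the parity of $mn - (n-j) = 2m + j - 2$ depends only on the parity of $j$. Hence the ``even'' branch of $H$ applies when $j \in \{0,2\}$ and the ``odd'' branch when $j=1$, regardless of $m$.

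Next I would compute the three values $H(2,k,i,j,m)$ for $j = 0, 1, 2$. A routine substitution shows these are linear in $i$ with $m$ dropping out entirely: specifically $H(2,k,i,0,m) = k - 2i - 2$, $H(2,k,i,1,m) = -k - 2$, and $H(2,k,i,2,m) = 2i - k - 2$. The middle value is always strictly negative, so by Proposition \ref{prop:Inj_char} the form $p_{k,2}$ has maximal rank if and only if both $k - 2i - 2 < 0$ and $2i - k - 2 < 0$, i.e., exactly when $|2i - k| < 2$.

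The final step is to translate the inequality $|2i - k| < 2$ back into a condition on $d$ modulo $k$. When $k = 2m$ is even, the only integer $i$ with $|2i - 2m| < 2$ is $i = m$, corresponding to $d \equiv m \pmod{k}$. When $k = 2m + 1$ is odd, the integer $2i - k$ has odd parity, so $|2i - k| < 2$ forces $2i - k \in \{-1, 1\}$, giving $i \in \{m, m+1\}$ and hence $d \equiv m$ or $d \equiv m+1 \pmod{k}$. The three concrete cases $k \in \{2,3,4\}$ then follow by direct substitution of $m = 1, 1, 2$ respectively.

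There is no real obstacle here: once Proposition \ref{prop:Inj_char} is in hand, the whole argument is bookkeeping. The only point that requires care is the nonstandard convention $1 \leq i \leq k$ (rather than $0 \leq i < k$), which slightly shifts how residue classes are named; in particular $i = k$ encodes $d$ being a multiple of $k$, so for $k = 3$ the ``failing'' residue $i = k = 3$ is precisely the divisible-by-three case.
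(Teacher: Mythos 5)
Your proposal is correct and follows essentially the same route as the paper: apply Proposition \ref{prop:Inj_char} with $n=2$, compute $H(2,k,i,j,m)$ for $j=0,1,2$ (getting $k-2i-2$, $-k-2$, $2i-k-2$, with the $j=1$ branch always negative), and translate the resulting inequality on $i$ into the stated residue conditions on $d$. The only cosmetic difference is that you phrase the conclusion as $|2i-k|<2$ and handle the parity of $mn-(n-j)$ via $j$ alone, which is exactly what the paper's observation that the parity of $m$ plays no role amounts to.
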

\begin{proof}
We know from Proposition \ref{prop:Inj_char} that we need to examine the non-negativity of $H(2,k,i,0,m)$, $H(2,k,i,1,m)$ and $H(2,k,i,2,m)$. Since $n=2$ is even, the parity of $m$ plays no role. Hence $H(2,k,i,0,m)=k-2i-2$, $H(2,k,i,1,m)=-2-k$ and $H(2,k,i,2,m)=2i-k-2$. Since $-2-k<0$ for all $k\geq 1$, we have failure if one of $k\geq 2(i+1)$ or $2i-2\geq k$ is satisfied. When $k=2m$, these reduce to $m\geq i+1$ and $i-1\geq m$, so $i=m$ is the only case where we can not establish failure. Similarly, when $k=2m+1$, the conditions reduce to $m+1/2\geq i+1$ and $m+1/2\leq i-1$, which gives that all cases fails to have maximal rank unless $i=m$ or $i+m+1$, as desired.
\end{proof}

\begin{lemma}
\label{lem:k=2}
The form $p_{2,n}$ fails expected injectivity on $\mathbf{k}[x_1,\dots, x_n]/(x_1^d,\dots, x_n^d)$ for any even $d>2$ when $n\geq 4$, and for any $d\equiv 2 \pmod{4}$ when $n=3$. In the remaining cases when $n\geq 3$, it has maximal rank.
\end{lemma}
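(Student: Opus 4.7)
The approach is to apply Proposition \ref{prop:Inj_char} with $k = 2$ and run a parity-based case analysis. Writing $d = 2m + i$ with $1 \leq i \leq 2$, so that $i = 1$ corresponds to $d$ odd and $i = 2$ to $d$ even, the function $H(n,2,i,j,m)$ simplifies to
\[
H = n + 2ij - ni - 2j - 2
\]
when $mn - (n-j)$ is even and to the same expression minus $2$ when it is odd. The task is to determine for which $n$ and $d$ some $j \in \{0, 1, \dots, n\}$ yields $H \geq 0$.

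When $d$ is odd, the $2ij$ and $2j$ terms cancel and $H$ collapses to the constant $-2$ or $-4$, independently of $j$, so no $j$ witnesses a failure and $p_{2,n}$ is a max-rank element. When $d$ is even with $d > 2$, so $i = 2$ and $m \geq 1$, the formula becomes $-n + 2j - 2$ or $-n + 2j - 4$. The natural move is to take $j = n$, giving $H = n - 2$ or $n - 4$ according to the parity of $mn$. For $n \geq 4$ this forces a failure regardless of $m$: if $n$ is even then $mn$ is automatically even and $H = n - 2 \geq 2$, while if $n \geq 5$ is odd then even the worse value $n - 4 \geq 1$ handles both parities of $m$.

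The remaining case is $n = 3$ with $d$ even. Here $j = n = 3$ gives $H = 1$ in the even-parity subcase and $H = -1$ in the odd-parity subcase, where the parity in question is that of $3m$, hence of $m$. Substituting $j \in \{0, 1, 2\}$ into both formulas shows that all such values leave $H$ strictly negative for both parities. Consequently the only source of failure is $j = 3$ with $m$ even, which is equivalent to $d \equiv 2 \pmod 4$. The only nontrivial step is the parity bookkeeping in the $n = 3$ case; everything else reduces to direct substitution into the formula for $H$.
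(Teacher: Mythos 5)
Your proposal is correct and follows essentially the same route as the paper: apply Proposition \ref{prop:Inj_char} with $k=2$, simplify $H$ for $i=1$ (where it collapses to $-2$ or $-4$) and $i=2$, and check the maximizing choices of $j$ with the parity of $mn-(n-j)$. The only cosmetic difference is that the paper tests both $j=n-1$ and $j=n$ for $n\geq 4$, whereas you use only $j=n$ and split on the parity of $n$; the bookkeeping in the $n=3$ case matches the paper's.
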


\begin{proof}
First, write $d=2m+2$ and assume $n\geq 4$. By Proposition \ref{prop:Inj_char}, we get the desired failure of injectivity when $H(n,2,2,j,m)$ is non-negative for appropriate $j$ since $k=i=2$ in this case. Here
\[
H(n,2,2,j,m) =
\begin{cases}
2j-n-2 & \text{ if } mn - (n-j) \text{ is even} \\
2j-n-4 & \text{ if } mn - (n-j) \text{ is odd}. 
\end{cases}
\]
This quantity is maximized for large $j$, so inserting $j=n-1$ and $j=n$ gives that $H(n,2,2,n-1,m)$ and $H(n,2,2,n,m)$ attains the two values $n-4$ and $n-2$. Since one of them will always be non-negative when $n\geq 4$, we get the claimed failures there. When $d=2m+1$, then $H(n,2,1,j,m)$ is either $-2$ or $-4$. In any case, it is always negative, independent on $n$ and $j$, so it has maximal rank in this case. Finally, for $n=3$, we have that $H(3,2,2,j,m)=2j-5$ or $H(3,2,2,j,m)=2j-7$, which is non-negative only in the first case when $j=3$ and never in the second case. The first case happens when $3m-(3-3)$ is even, that is, when $m$ is even. Hence the only failure of $p_{2,3}$ to have maximal rank occurs when $d=2m+2$ for $m$ even, i.e. when $d\equiv 2 \pmod{4}$.
\end{proof}

\begin{lemma}
\label{lem:rest_failing}
For the following values of $n$ and $k$, multiplication by the form $p_{n,k}$ fails expected injectivity on $\mathbf{k}[x_1,\dots, x_n]/(x_1^d,\dots, x_n^d)$ exactly for the below values of $d$.
\begin{itemize}
\item $n=4, k=3$ and any $d$.
\item $n=3, k=5$ and any $d$.
\item $n=3, k=4$ and $d\not \equiv 2 \pmod{4}$.
\item $n=3, k=3$ and $d\not \equiv \pm 1 \pmod{6}$.
\end{itemize}
\end{lemma}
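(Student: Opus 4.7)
The plan is to apply Proposition \ref{prop:Inj_char} directly in each of the four cases and analyze the sign of $H(n,k,i,j,m)$ as $i$ and $j$ vary. For fixed $(n,k)$ the function $H$ is linear in $j$ (and in $i$) once the parity of $mn-(n-j)$ is specified, so this reduces the whole lemma to a finite case analysis in the variables $i\in\{1,\dots,k\}$ and $j\in\{0,1,\dots,n\}$, with the parity of $m$ entering only through which of the two branches of \eqref{eq:H-function} is active. For $n=3$, $mn-(n-j)=3m-3+j$ has the same parity as $m+j+1$, so the ``even'' branch applies exactly when $m+j$ is odd; for $n=4$, $mn-(n-j)=4m-4+j$ has the parity of $j$, so the ``even'' branch applies when $j$ is even.

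First I would handle the two ``always failing'' cases. For $n=4,k=3$, substituting into \eqref{eq:H-function} gives $H(4,3,i,j,m)=5+2ij-4i-3j$ (even branch) or $=2+2ij-4i-3j$ (odd branch); a quick scan shows that $j=0$ with $i=1$, $j=4$ with $i=2$, and $j=4$ with $i=3$ already make the even-branch value non-negative, and the parity condition is automatically met in each instance. For $n=3,k=5$ one obtains $H=7+2ij-3i-5j$ or $2+2ij-3i-5j$; here for every $i\in\{1,\dots,5\}$ one finds a $j\in\{0,\dots,3\}$ giving a non-negative value, and by adjusting the choice of $j$ between two admissible candidates (differing by one) one can always match whichever parity of $m$ occurs. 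I would present this as a small table rather than five separate checks.

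Next come the two cases requiring a genuine classification. For $n=3,k=4$, computing $H(3,4,i,j,m)$ for each $i$ yields the values $\{2-2j,-2-2j\}$ for $i=1$, $\{-1,-5\}$ for $i=2$, $\{2j-4,2j-8\}$ for $i=3$, and $\{4j-7,4j-11\}$ for $i=4$. The key observation is that for $i=2$ the function $H$ is identically negative, so maximal rank holds there, while for $i=1,3,4$ there always exists a $j$ with the correct parity producing $H\geq 0$; this gives failure exactly when $d\not\equiv 2\pmod 4$. For $n=3,k=3$ the analogous tabulation in $i\in\{1,2,3\}$ shows that the branches $i=1,m$ even (i.e.\ $d\equiv 1\pmod 6$) and $i=2,m$ odd (i.e.\ $d\equiv 5\pmod 6$) give $H<0$ uniformly in $j$, whereas each of the other four residue classes modulo $6$ admits some $j\in\{0,1,2,3\}$ where $H\geq 0$.

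The main obstacle is not any single computation but rather the bookkeeping: in every case one must simultaneously track the allowed range of $j$, the sign of $H$ on each parity branch, and whether the forced parity of $m+j$ (for $n=3$) or of $j$ (for $n=4$) is compatible with the chosen $j$. I would therefore present the argument as four subcases, each beginning with an explicit table of the two possible values of $H(n,k,i,j,m)$ as $j$ ranges over $\{0,\dots,n\}$ for each $i\in\{1,\dots,k\}$, and then read off the residue classes of $d$ modulo $k$ for which at least one entry is non-negative under the correct parity constraint.
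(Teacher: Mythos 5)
Your proposal is correct and follows essentially the same route as the paper: both proofs reduce the lemma to Proposition \ref{prop:Inj_char} and then carry out a finite tabulation of $H(n,k,i,j,m)$ over $i\in\{1,\dots,k\}$, $j\in\{0,\dots,n\}$, and the two parity branches, reading off the residue classes of $d$ for which some entry is non-negative (the paper records exactly this data in Figure \ref{fig:failing_table}). Your parity bookkeeping ($m+j$ odd for $n=3$, $j$ even for $n=4$) and the resulting values of $H$ agree with the paper's table, so the argument is sound.
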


\begin{proof}
Since the proof is similar to Lemma \ref{lem:n=2} and Lemma \ref{lem:k=2}, we collect all necessary calculations in Figure \ref{fig:failing_table}.

\begin{figure}
\begin{center}
\rowcolors{2}{gray!15}{white}
\begin{tabular}{c|c|c|c|c|c}
$n$ & $k$ & $j$ & $m \pmod{2}$ & $H(n,k,i,j,m)$ & $d$ failing \\
\hline
$4$& $3$ & $0$ & $0$ & $5-4i$ & $3m+1$ \\
$4$& $3$ & $0$ & $1$ & $5-4i$ & $3m+1$ \\
$4$& $3$ & $4$ & $0$ & $4i-7$ & $3m+2, 3m+3$ \\
$4$& $3$ & $4$ & $1$ & $4i-7$ & $3m+2, 3m+3$ \\
$3$& $5$ & $0$ & $0$ & $2-3i$ & $-$ \\
$3$& $5$ & $0$ & $1$ & $7-3i$ & $5m+1, 5m+2$ \\
$3$& $5$ & $1$ & $0$ & $2-i$ & $5m+1, 5m+2$ \\
$3$& $5$ & $1$ & $1$ & $-3-i$ & $-$ \\
$3$& $5$ & $2$ & $0$ & $i-8$ & $-$  \\
$3$& $5$ & $2$ & $1$ & $i-3$ & $5m+3, 5m+4, 5m+5$ \\
$3$& $5$ & $3$ & $0$ & $3i-8$ & $5m+3, 5m+4, 5m+5$ \\
$3$& $5$ & $3$ & $1$ & $3i-13$ & $5m+5$ \\
$3$& $4$ & $0$ & $0$ & $1-3i$ & $-$ \\
$3$& $4$ & $0$ & $1$ & $5-3i$ & $4m+1$ \\
$3$& $4$ & $1$ & $0$ & $1-i$ & $4m+1$ \\
$3$& $4$ & $1$ & $1$ & $-3-i$ & $-$ \\
$3$& $4$ & $2$ & $0$ & $i-7$ & $-$ \\
$3$& $4$ & $2$ & $1$ & $i-3$ & $4m+3, 4m+4$ \\
$3$& $4$ & $3$ & $0$ & $3i-7$ & $4m+3, 4m+4$ \\
$3$& $4$ & $3$ & $1$ & $3i-11$ & $4m+4$ \\
$3$& $3$ & $0$ & $0$ & $-3i$ & $-$ \\
$3$& $3$ & $0$ & $1$ & $3-3i$ & $3m+1$ \\
$3$& $3$ & $1$ & $0$ & $-i$ & $-$ \\
$3$& $3$ & $1$ & $1$ & $-3-i$ & $-$ \\
$3$& $3$ & $2$ & $0$ & $i-6$ & $-$ \\
$3$& $3$ & $2$ & $1$ & $i-3$ & $3m+3$ \\
$3$& $3$ & $3$ & $0$ & $3i-6$ & $3m+2, 3m+3$ \\
$3$& $3$ & $3$ & $1$ & $3i-9$ & $3m+3$ \\
\end{tabular}
\end{center}
\caption{Failures of $p_{n,k}$ to be a max-rank element on $A$ given $d=km+i$ for some values of $n$ and $k$.}
\label{fig:failing_table}
\end{figure}

As an example, when $n=k=3$, the table can be read as follows. 
Recall from Proposition \ref{prop:Inj_char} that we get failure of injectivity when $d=km+i$ exactly when $H(3,3,i,j,m)\geq 0$ for some $j$. Looking at the rows where $n=k=3$, we then see that when $m$ is even, we get failure when $i=3m+2$ and $d=3m+3$, all coming from the row where $j=3$, but not when $d=3m+1$. Hence it has maximal rank when $d\equiv 1 \pmod{6}$ but not when $d\equiv 2,3 \pmod{6}$. When $m$ is odd, the table says that we have failures for $d=3m+1$ and $d=3m+3$, the first coming from $j=0$ and the other from $j=2$ or $j=3$, but not when $d=3m+2$. Hence it has maximal rank when $d\equiv 5 \pmod{6}$ but not when $d\equiv 4,6 \pmod{6}$, as claimed. The remaining results can be obtained from Figure \ref{fig:failing_table} in the same way. 
\end{proof}

We are now ready to state exactly when the power sum symmetric polynomial $p_{n,k}=x_1^k + \cdots + x_n^k$ is a max-rank element on $A=\mathbf{k}[x_1,\dots, x_n]/(x_1^d, \dots, x_n^d)$.

\begin{theorem}
\label{thm:most_failures}
Let $n\geq 2$. Then, multiplication by the power sum symmetric form $p_{n,k}=x_1^k+\cdots + x_n^k$ is a max-rank element on $A=\mathbf{k}[x_1,\dots, x_n]/(x_1^d, \dots, x_n^d)$, for some integer $d>k$, exactly for the values of $d$ indicated in Figure \ref{fig:Char_power_sum}.

\begin{figure}[h]
\begin{center}
\begin{tabular}{c|c|c|c}
\diagbox{$k$}{$n$} & $2$ & $3$ & $\geq 4$  \\
\hline
$2$ & $d$ odd & $d\not \equiv 2 \pmod{4}$ & $d$ odd \\
$3$ & $d\not\equiv 0 \pmod{3}$ & $d\equiv \pm 1 \pmod{6}$ & -\\
$4$ & $d\equiv 2 \pmod{4}$ & $d\equiv 2 \pmod{4}$ & -\\
$2m > 4$ & $d\equiv m \pmod k$ & - & -\\
$2m+1>4$ & $d\equiv m,m+1 \pmod k$ & - & -\\
\end{tabular}
\end{center}
\caption{All $d>k$ where $p_{n,k}$ is a max-rank element on $A$.}
\label{fig:Char_power_sum}
\end{figure}

\end{theorem}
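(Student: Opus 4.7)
My plan is to derive Theorem \ref{thm:most_failures} by piecing together the three preceding lemmas and filling in the remaining ``always fails'' entries of Figure \ref{fig:Char_power_sum} with a direct application of Proposition \ref{prop:Inj_char}.

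First I would read off the nontrivial cells of Figure \ref{fig:Char_power_sum} from the lemmas already proved. The $n=2$ column is precisely Lemma \ref{lem:n=2}; the $k=2$ row for $n\geq 3$ is Lemma \ref{lem:k=2}; and the four sporadic cells $(n,k)\in\{(3,3),(3,4),(3,5),(4,3)\}$ are the four bullet points of Lemma \ref{lem:rest_failing}. Together these account for every entry of the table that states a nontrivial congruence condition on $d$.

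What remains is to verify the dashes, i.e.\ that $p_{n,k}$ fails to be a max-rank element for \emph{every} $d>k$ whenever $(n,k)$ lies in one of the ranges $n\geq 4$ with $k\geq 4$, or $n=3$ with $k\geq 6$, or $n\geq 5$ with $k=3$. For each such pair, Proposition \ref{prop:Inj_char} reduces the claim to exhibiting, for every $i\in\{1,\dots,k\}$ and every $m\geq 1$, some $j\in[0,n]$ with $H(n,k,i,j,m)\geq 0$. I would exploit three structural features of $H$: it is affine in $j$ with slope $2i-k$; its two parity branches differ by exactly $k$; and the required parity of $n(m-1)+j$ flips when $j$ changes by one. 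Thus $H$ is maximized over $j$ at $j=0$ when $i\leq k/2$ and at $j=n$ when $i\geq k/2$, and for either prescribed parity class I can reach a value within $|2i-k|$ of that maximum. Plugging the worst such value into the even or odd formula for $H$ gives a lower bound that one checks to be non-negative for all $i\in\{1,\dots,k\}$ in the claimed range of $(n,k)$, which is elementary arithmetic.

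The main obstacle I expect is parity bookkeeping near the boundary of the ``always fails'' region, in particular at $(n,k)=(3,6)$ and $(n,k)=(4,4)$, where the margin in the lower bound is thin: depending on the parity of $m$, one may have to step two units away from the maximizing endpoint, for example to $j\in\{1,2\}$ or to $j\in\{n-1,n-2\}$ in the even branch, in order to land in the correct parity class while keeping $H\geq 0$. I would handle these boundary pairs by a small explicit check, most cleanly presented as a table in the style of Figure \ref{fig:failing_table}, and then observe that for any larger $n$ or larger $k$ the bound grows linearly and the verification becomes automatic.
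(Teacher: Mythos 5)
Your proposal is correct and follows essentially the same route as the paper: the congruence cells are exactly Lemmas \ref{lem:n=2}, \ref{lem:k=2} and \ref{lem:rest_failing}, and the remaining ``always fails'' cells are settled through Proposition \ref{prop:Inj_char} by exploiting that the even-parity branch of $H$ is affine in $j$ with slope $2i-k$ and evaluating it at the maximizing endpoint together with the adjacent value, i.e.\ at $j\in\{0,1\}$ or $j\in\{n-1,n\}$. Note only that stepping a single unit in from the endpoint always lands in the required parity class, so your two-step caution at $(n,k)=(3,6)$ and $(4,4)$ is unnecessary (the explicit check there is harmless), while your case list rightly includes $(n,k)=(5,3)$, for which one needs the integer-aware bound $(n-2)\lceil k/2\rceil-n\geq 0$ rather than the cruder $\frac{(n-2)k}{2}-n\geq 0$ used by the paper.
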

\begin{proof}
We know that 
the statement 
is correct when $n=2$ or $k=2$ from Lemma~\ref{lem:n=2} and Lemma \ref{lem:k=2}. Taking the sporadic cases in Lemma \ref{lem:rest_failing} into account, we are left to show that $p_{n,k}$ is not a max-rank element for any $d>k$ in the following cases. When $k\geq 6$ if $n\geq 3$, when $k\geq 4$ if $n\geq 4$, and when $k\geq 3$ if $n\geq 6$. 

Write $d=km+i$ for some $1\leq i\leq k$. By Proposition \ref{prop:Inj_char}, we need to find $j$ such that $H(n,k,i,j,m)\geq 0$ where $H$ is as defined in \eqref{eq:H-function}. Let $H_1(n,k,i,j)$ be defined such that $H_1(n,k,i,j)=H(n,k,i,j,m)$ when $mn-(n-j)$ is even. Since $H(n,k,i,j,m)=H_1(n,k,i,j)$ for at least one of the cases when $j=0$ or $j=1$, as well as one of $j=n-1$ or $j=n$, it suffices to show that $H_1(n,k,i,j)\geq 0$ for both $j=0$ and $j=1$, or for both $j=n-1$ and $j=n$, to get our desired result. Noting that
\[
H_1(n,k,i,j) = j(2i-k) + nk-ni-n-k,
\]  
we see that $H_1$ is maximized when $j\in \{0,1\}$ if $2i\leq k$, and maximized when $j\in \{n-1,n\}$ if $2i\geq k$. 

Assume first that $2i\leq k$. Then 
\[
H_1(n,k,i,0) = nk-ni-n-k \geq nk-\frac{nk}{2}-n-k = \frac{nk}{2}-n-k
\]
and
\[
H_1(n,k,i,1) = 2i-k+nk-ni-n-k=(n-2)k-(n-2)i-n\geq \frac{(n-2)k}{2}-n.
\]
Note here that both of these lower bounds are equal, so let us call that function $h(n,k)$. Before we examine $h(n,k)$, let us look at the case when $2i\geq k$. Then
\[
H_1(n,k,i,n-1) = (n-2)i-n \geq \frac{(n-2)k}{2}-n = h(n,k)
\]
and
\[
H_1(n,k,i,n) = ni-n-k\geq \frac{nk}{2}-n-k = h(n,k).
\]
Hence, in all cases it suffices to show that $h(n,k)\geq 0$ to get the desired non-negativity for $H_1$. This is now a case of simple calculus. Taking the derivative with respect to $n$, we see that $h_n(n,k)=\frac{k}{2}-1\geq 0$ since $k\geq 2$ as always. Hence $h$ is increasing in $n$. The calculations $h(6,k)=2k-6\geq 0$ for $k\geq 3$ then shows that $h(n,k)\geq 0$ when $n\geq 6$ and $k\geq 3$, $h(4,k)=k-4\geq 0$ for $k\geq 4$ shows that $h(n,k)\geq 0$ when $n\geq 4$ and $k\geq 4$, and finally $h(3,k)=\frac{k}{2}-3\geq 0$ for $k\geq 6$ shows that $h(n,k)\geq 0$ when $n\geq 3$ and $k\geq 6$.
\end{proof}

\section{Schur polynomials}

In this section, we initiate the study of when Schur polynomials are max-rank elements on monomial complete intersections. Let us begin with their definition.

\begin{definition}
Let $\lambda=(\lambda_1,\dots, \lambda_n)$ be a partition. The \emph{Schur polynomial} $s_\lambda(x_1,\dots, x_n)$ is the symmetric polynomial defined by
\[
s_\lambda(x_1,\dots, x_n) = \frac{\det \begin{pmatrix}
x_1^{\lambda_1 + n-1} & x_1^{\lambda_2 + n-2} & \cdots & x_1^{\lambda_n}\\
x_2^{\lambda_1 + n-1} & x_2^{\lambda_2 + n-2} & \cdots & x_2^{\lambda_n}\\
\vdots & \vdots & \ddots & \vdots \\
x_n^{\lambda_1 + n-1} & x_n^{\lambda_2 + n-2} & \cdots & x_n^{\lambda_n}\\
\end{pmatrix}}{\det \begin{pmatrix}
x_1^{n-1} & x_1^{n-2} & \cdots & 1\\
x_2^{n-1} & x_2^{n-2} & \cdots & 1\\
\vdots & \vdots & \ddots & \vdots \\
x_n^{n-1} & x_n^{n-2} & \cdots & 1\\
\end{pmatrix}}.
\]
\end{definition}

The above is only one of many equivalent definitions of Schur polynomials. For others and an introduction to their rich theory, we recommend \cite[Chapter 7]{Stanley_Enum_Com_Vol_2}. The Schur polynomials also contains several interesting subfamilies of symmetric polynomials. For example, if $\lambda=(\lambda_1,0,\dots, 0)$ only has one part, then 
\[
s_{(\lambda_1,0,\dots, 0)} = h_{\lambda_1} = \sum_{a_1+\cdots + a_n=\lambda_1}x_1^{a_1}\cdots x_n^{a_n}
\]
is the complete symmetric homogeneous polynomial of degree $\lambda_1$. Another subfamily of interest appears when $\lambda=(1,1,\dots, 1,0,\dots, 0)$ only has parts of size one. If the number of non-zero parts is $k$, then
\[
s_{(1,1,\dots, 1,0,\dots, 0)} = e_k= \sum_{1\leq i_1< i_2<\cdots <i_k\leq n}x_{i_1}\cdots x_{i_n}
\]
is the $k$:th elementary symmetric polynomial in $n$ variables. 

For a general partition $\lambda$, it is a difficult task to determine for which $n$ and $d$ the Schur polynomial $s_\lambda(x_1,\dots, x_n)$ is a max-rank element on the monomial complete intersection generated in degree $d$. So let us begin by focusing on the case with few number of variables. 

\subsection{The case $n=2$}

Our goal now is, for any partition $(a,b)$, to completely determine when $s_{(a,b)}(x,y)$ is a max-rank element on the algebra $A=\mathbf{k}[x,y]/(x^d,y^d)$.

For our first observation, we have that
\[
s_{(a,b)}(x,y) = \frac{\begin{pmatrix}
x^{a+1} & x^b\\
y^{a+1} & y^b
\end{pmatrix}}{\begin{pmatrix}
x & 1\\
y & 1
\end{pmatrix}} = (xy)^b \frac{\begin{pmatrix}
x^{a-b+1} & 1\\
y^{a-b+1} & 1
\end{pmatrix}}{\begin{pmatrix}
x & 1\\
y & 1
\end{pmatrix}} =(xy)^bh_{a-b}(x,y).
\]
Since $h_{a-b}(x,y) = x^{a-b} + x^{a-b-1}y + \cdots + y^{a-b}$, we have a good understanding for how $s_{(a,b)}(x,y)$ looks like. Further, as $s_{(a,b)}$ has degree $a+b$, if $a+b\geq 2d-2$, then $s_{(a,b)}$ is trivially a max-rank element. Hence we may assume that $a+b<2d-2$ unless specified otherwise. 

Assume first now that $a+b=2m$. Then, by Lemma \ref{lem:inj_sufficient}, $s_{(a,b)}$ is a max-rank element if and only if multiplication by $s_{(a,b)}$ is bijective from degree $d-1-m$ to $d-1+m$. Now, $A_{d-1-m}$ has a basis given by $\{x^{d-1-m}, x^{d-m}y,\dots, y^{d-1-m}\}$ and $A_{d-1+m}$ a basis given by $\{x^{d-1}y^m, x^{d-2}y^{m+1},\dots, x^my^{d-1}\}$. Since $s_{(a,b)}$ is the sum of all monomials of degree $a+b$ that is divisible by $(xy)^b$, we see that the $(d-m)\times(d-m)$ matrix $M$ representing multiplication by $s_{(a,b)}$ with respect to these bases are given by
\[
M_{i,j} = 
\begin{cases}
1 & \text{ if } x^{d-m-i+b}y^{i-1+b} \text{ divides } x^{d-j}y^{m+j-1}, \\
0 & \text{ otherwise}.
\end{cases}
\]
Note that $x^{d-m-i+b}y^{i-1+b}$ divides $x^{d-j}y^{m+j-1}$ if and only if $x^{d-m-(i+1)+b}y^{i+b}$ divides $x^{d-(j+1)}y^{m+j}$. Hence every diagonal of $M$ is constant, i.e. $M$ is a Toeplitz matrix. Moreover, if $a\geq d-1$, then $M$ is the all ones matrix and does not have maximal rank when $a+b<2d-1$. Hence we can assume that $a<d-1$ from now on. Then, $M$ is the Toeplitz matrix defined by having the first row consisting of $a-m+1$ ones followed by zeroes and the first column having $m-b+1$ ones followed by zeroes.

\begin{example}
Let $d=7$ and $(a,b)=(3,1)$. Then the matrix representing multiplication by $s_{(3,1)}=(xy)h_2(x,y)$ on $A=\mathbf{k}[x,y]/(x^7,y^7)$ from degree $4$ to degree $8$ is
\[ M(3,3,5)=
\begin{pmatrix}
1 & 1 & 0 & 0 & 0\\
1 & 1 & 1 & 0 & 0\\
0 & 1 & 1 & 1 & 0\\
0 & 0 & 1 & 1 & 1\\
0 & 0 & 0 & 1 & 1\\
\end{pmatrix}.
\]
This matrix has determinant $0$, so $s_{(3,1)}$ is not a max-rank element on $A$.
\end{example}

Determinant formulas for different families of $(0,1)$ Toeplitz matrices have been studied recently, see e.g. \cite{Shitov}. Our main tool for determining when $s_{(a,b)}$ is a max-rank element is now a complete description for the determinant of Toeplitz matrices of the above form that may therefore be of independent interest.

\begin{theorem}\label{thm:Toeplitz_det_formula}
Let $M(r,c,n)$ be the Toeplitz matrix of size $n\times n$ with the first row given by $c$ ones followed by zeroes and with the first column given by $r$ ones followed by zeroes. Then
\[
\det(M(r,c,n)) = 
\begin{cases}
(-1)^{n(r-1)} & \text{ if } n\equiv 0 \pmod{r+c-1}\\
(-1)^{(n+1)(r-1)} & \text{ if } n\equiv 1 \pmod{r+c-1}\\
0 & \text{ otherwise}.
\end{cases}
\]
\end{theorem}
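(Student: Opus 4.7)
Set $b = r + c - 1$. The plan is induction on $n$, using base cases $0 \le n \le b-1$ and the recurrence $\det M(r,c,n) = (-1)^{b(r-1)}\det M(r,c,n-b)$ for $n \ge b$.

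For the base cases, $n = 0, 1$ give $\det = 1$ trivially. For $2 \le n \le b - 1$, the formula predicts $\det = 0$; assuming the natural regime $r, c \ge 2$, this follows from a direct column-equivalence argument. When $r \ge 3$, some $j_1$ with $n - r + 2 \le j_1 \le c$ also satisfies $j_1 + 1 \le n$, and columns $C_{j_1}, C_{j_1+1}$ of $M(r,c,n)$ then agree in every row, because the band is too wide for any row to distinguish them. The symmetric case $c \ge 3$ follows from $M(r,c,n)^T = M(c,r,n)$, and the residual case $r = c = 2, n = 2$ is direct.

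For the inductive step, decompose
\[
M(r, c, n) = \begin{pmatrix} A & T \\ L & B \end{pmatrix}
\]
with $A = M(r, c, n-b)$ and $B = M(r, c, b)$. By the Schur complement, $\det M = \det B \cdot \det(A - T B^{-1} L)$, so the induction reduces to proving (i) $\det B = (-1)^{b(r-1)}$ and (ii) $T B^{-1} L = 0$.

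Both facts rest on a single row reduction. Replacing $R_{i+1}$ by $R_{i+1} - R_i$ for $i = 1, \ldots, b - 1$ turns rows $2, \ldots, b$ of $B$ into the signed basis vectors $e_{c+1}, \ldots, e_b, -e_1, \ldots, -e_{c-1}$ -- everything except $e_c$. In particular $e_k = R_{r+k-1} - R_{r+k}$ for $1 \le k \le c - 1$, and this expression uses only rows indexed $\ge r$. Consequently, for $l \le r - 1$ the minor $B_{l, k}$ is singular (the row span of $\{R_i : i \ne l\}$ still contains $e_k$, so deleting coordinate $k$ collapses the rank from $b - 1$ to $b - 2$), and the top-left $(c-1) \times (r-1)$ block of $B^{-1}$ vanishes. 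Since $T$ has its nonzero columns in the first $c - 1$ and $L$ its nonzero rows in the first $r - 1$, only this vanishing block of $B^{-1}$ feeds into the product, which proves (ii). For (i), the further operation $R_1 \to R_1 + R_{r+1} + \cdots + R_b$ turns row $1$ into $e_c$, so $B$ becomes a signed permutation matrix realizing the cyclic shift $\tau(i) \equiv i + c - 1 \pmod{b}$ with $c - 1$ sign changes; then $\sgn(\tau) \cdot (-1)^{c-1}$ matches $(-1)^{b(r-1)}$ via a short parity check using $\sgn(\tau) = (-1)^{b - \gcd(c-1, b)}$ and that $\gcd(c-1,b) \mid b$. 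The main obstacle is the structural argument for (ii) -- pinpointing which block of $B^{-1}$ is zero and lining it up with the supports of $T$ and $L$; once that is in hand, induction closes the proof.
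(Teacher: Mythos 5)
Your argument is correct, and it takes a genuinely different route from the paper's proof. The paper identifies $M(r,c,n)$ with the Jacobi--Trudi matrix of $s_{(r-1)^n}$ in $r+c-2$ variables specialized at negated $(r+c-1)$-th roots of unity, then evaluates that principal specialization via Stanley's hook content formula and the $q$-Lucas theorem, the congruence conditions arising from vanishing $q$-binomial coefficients. You instead prove the period-$b$ recursion $\det M(r,c,n)=(-1)^{b(r-1)}\det M(r,c,n-b)$, $b=r+c-1$, by pure linear algebra: the Schur complement with $B=M(r,c,b)$ works because the top-left $(c-1)\times(r-1)$ block of $B^{-1}$ vanishes --- your observation that $e_k=R_{r+k-1}-R_{r+k}$ for $k\le c-1$ uses only rows of index at least $r$, so the relevant cofactors of $B$ are singular, is exactly right and lines up correctly with the supports of $T$ and $L$ --- and your reduction of $B$ to a signed cyclic shift gives $\det B=(-1)^{b(r-1)}$; the parity identity $c-1+\gcd(c-1,b)\equiv br \pmod 2$ behind that step does check out. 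Two small points: the window of indices with equal adjacent columns in the base case is $n-r+1\le j_1\le c-1$ (yours is shifted by one), and this single argument already covers all $r,c\ge 2$ with $2\le n\le b-1$, so the split into $r\ge 3$, $c\ge 3$, and $r=c=2$ is unnecessary; also your standing hypothesis $r,c\ge 2$ is genuinely needed, since for $\min(r,c)=1$ the matrix is triangular with determinant $1$ for every $n$, a degenerate case the theorem's statement silently excludes (and which in the paper's application only occurs with $b\le 2$, where it is harmless). What your approach buys is an elementary, self-contained proof that exposes the structural source of the periodicity; what the paper's approach buys is the link to Schur function evaluations and $q$-binomial identities, which is closer to the combinatorial context and generalizes more readily.
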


\begin{proof}
The key idea for the proof is to view $M(r,c,n)$ as a specialization of another matrix where a formula for the determinant is known. To this end, let $M_e(r,c,n)$ be the Toeplitz matrix with first row given by $e_{r-1}, e_r, \dots, e_{r+c-2}, 0, 0, \dots, $ and first column given by $e_{r-1}, e_{r-2},\dots, e_1, 1, 0, \dots$, where $e_i$ is the elementary symmetric polynomial of degree $i$ in the variables $x_1,\dots, x_{r+c-2}$. The determinant of $M_e(r,c,n)$ can then be calculated from the Jacobi-Trudi identity 
\[
s_{\lambda} = \det
\begin{pmatrix}
e_{\lambda_1'} & e_{\lambda_1'+1} & \cdots & e_{\lambda_1'+l-1} \\
e_{\lambda_2'-1} & e_{\lambda_2'} & \cdots & e_{\lambda_2'+l-2} \\
\vdots & \vdots & \ddots & \vdots \\
e_{\lambda_l'-l+1} & e_{\lambda_l'-l+2} & \cdots & e_{\lambda_l'}
\end{pmatrix}
\]
where $\lambda'$ is the conjugate partition to the partition $\lambda$. To get $M_e(r,c,n)$, we let $\lambda_i'=r-1$ for $i=1,2,\dots, n$ with $l=n$ and see that $$\det(M_e(r,c,n))=s_{(r-1)^n}(x_1,\dots, x_{r+c-2})$$
where the partition $\lambda=(r-1)^n$ is given by $\lambda_i=n$ for $i=1,\dots, r-1$, i.e. $(r-1)^n$ is a rectangle with $r-1$ rows and $n$ columns.

To connect $M_e(r,c,n)$ to $M(r,c,n)$, we now need to find a specialization of $x_1,\dots, x_{r+c-2}$ such that $e_i(x_1,\dots, x_{r+c-2})=1$ for all $i=1,\dots, r+c-2$. Recall the identity
\begin{equation*}
(t+x_1)\cdots (t+x_{r+c-2})= t^{r+c-2} + e_1(x_1,\dots, x_{r+c-2})t^{r+c-3} + \cdots + e_{r+c-2}(x_1,\dots, x_{r+c-2}).
\end{equation*}
From this, we see that we are looking for the roots of
\[
t^{r+c-2} + t^{r+c-3} + \cdots + t + 1 = \frac{t^{r+c-1}-1}{t-1}.
\]
But these are given by the $(r+c-1)$ roots of unity $q^j$ where $q=e^{\frac{2\pi i}{r+c-1}}$ and $j=1,2,\dots, r+c-2$. Hence
\begin{align*}
\det(M(r,c,n)) &= \left. \det(M_e(r,c,n))\right|_{x_j=-q^j} \\
&= s_{(r-1)^n}(-q, -q^2,\dots, -q^{r+c-2}) \\
&=(-q)^{n(r-1)}\cdot  s_{(r-1)^n}(1,q,\dots, q^{r+c-3}),
\end{align*}
where we used that a Schur polynomial $s_\lambda$ is homogeneous of degree $|\lambda|$. 

Our task now is to evaluate $s_{(r-1)^n}(1,q,\dots, q^{r+c-3})$. Luckily, this is known as a principal evaluation of the Schur function and Stanley's hook formula \cite[Theorem~7.21.2]{Stanley_Enum_Com_Vol_2} gives that
\[
s_{(r-1)^n}(1,q,\dots, q^{r+c-3}) = q^{b((r-1)^n)}\prod_{u\in (r-1)^n} \frac{1-q^{r+c-2+c(u)}}{1-q^{h(u)}}
\]
where the product goes over all entries $u=(i,j)$ of the partition $(r-1)^n$, 
\begin{equation*}
b((r-1)^n) = \sum_{k=1}^{r-1}(k-1)n = \frac{n(r-1)(r-2)}{2},
\end{equation*}
for $u=(i,j)$, $c(u)=j-i$ is the content at $u$, and $h(u)=(n-j)+(r-1-i)+1$ is the hook length at $u$. Therefore
\[
s_{(r-1)^n}(1,q,\dots, q^{r+c-3}) = q^{\frac{n(r-1)(r-2)}{2}}\prod_{i=1}^{r-1}\prod_{j=1}^n \frac{1-q^{r+c-2+j-i}}{1-q^{(n-j)+(r-1-i)+1}}.
\]
After re-indexing, our current goal is then to calculate
\[
\prod_{i=0}^{r-2}\prod_{j=0}^{n-1} \frac{1-q^{c+n-1+i-j}}{1-q^{j+i+1}}.
\]
Recalling the definition of the $q$-binomial coefficient
\[
\binom{a}{b}_q = \frac{(1-q^a)(1-q^{a-1})\cdots (1-q^{a-b+1})}{(1-q)\cdots (1-q^b)},
\]
a short calculation gives 
\begin{equation}\label{eq:q-binom_prod}
\prod_{i=0}^{r-2}\prod_{j=0}^{n-1} \frac{1-q^{c+n-1+i-j}}{1-q^{j+i+1}} = \prod_{i=0}^{r-2} \frac{\binom{c+n-1+i}{n+i}_q}{\binom{c-1+i}{i}_q}.
\end{equation}
Here we are again lucky that there are ways to simplify evaluations of $q$-binomial coefficients when $q$ is a root of unity. If $q$ is an $e$:th root of unity, then the $q$-Lucas theorem \cite{q-Lucas} says that if $a=ke+s$ and $b=le+t$ with $0\leq s,t<e$, then 
\[
\binom{a}{b}_q = \binom{k}{l}\cdot \binom{s}{t}_q.
\]
Setting $e=r+c-1$ and writing $n+i=le+t$, we get that
$c-1+n+i=le+(t+c-1)$ if $t<r$ and $c-1+n+i=(l+1)e+(t-r)$ if $t\geq r$. Hence
\[
\binom{c+n-1+i}{n+i}_q = \begin{cases}
\binom{c-1+t}{t}_q & \text{ if } t<r \\
\binom{l+1}{l}\cdot \binom{t-r}{t}_q = 0 & \text{ otherwise}.
\end{cases}
\]
So the only way for $\binom{c+n-1+i}{n+i}_q$ to be non-zero for all $i=0,\dots, r-2$, so that the product in \eqref{eq:q-binom_prod} is non-zero, is that $n\equiv 0 \pmod{r+c-1}$ or $n\equiv 1 \pmod{r+c-1}$, in all other cases, the whole sought determinant is zero.

When $n\equiv 0 \pmod{r+c-1}$, $n=el$ for some $l$ and $\binom{c+n-1+i}{n+i}_q=\binom{c-1+i}{i}_q$, so the product in \eqref{eq:q-binom_prod} simplifies to $1$. If instead $n\equiv 1 \pmod{r+c-1}$, then nearly everything in \eqref{eq:q-binom_prod} directly simplifies to $1$, remembering that $\binom{c-1}{0}_q=1$, we are only left to evaluate $\binom{c-1+r-1}{r-1}_q$. From the definition of the $q$-binomial coefficient we have that
\[
\binom{c-1+r-1}{r-1}_q = \prod_{i=1}^{r-1}\frac{1-q^{c+r-1-i}}{1-q^i}.
\]
We now note that since $q$ is the principal $(r+c-1)$ root of unity, $q^{r+c-1-i}=q^{-i}$, giving that
\[
\frac{1-q^i}{1-q^{c+r-1-i}} = \frac{1-q^i}{1-q^{-i}} = \frac{q^i(1-q^i)}{q^i-1} = -q^i.
\]
Therefore
\[
\prod_{i=1}^{r-1}\frac{1-q^{c+r-1-i}}{1-q^i} = \prod_{i=1}^{r-1} (-q)^{-i} = (-1)^{r-1}q^{-\frac{r(r-1)}{2}}.
\]
Putting everything together, when $n\equiv 0 \pmod{r+c-1}$ we have that $$\det(M(r,c,n))=(-q)^{n(r-1)}\cdot q^{\frac{n(r-1)(r-2)}{2}}=(-1)^{n(r-1)}$$
by using that $q^n=1$ in this case. Next, when $n\equiv 1 \pmod{r+c-1}$, then $q^n=q$ and we get that
\begin{align*}
\det(M(r,c,n))&= (-q)^{n(r-1)}\cdot q^{\frac{n(r-1)(r-2)}{2}}\cdot (-1)^{r-1}\cdot q^{-\frac{r(r-1)}{2}} \\
&=(-1)^{(n+1)(r-1)}\cdot q^{n(r-1)+\frac{n(r-1)(r-2)}{2}-\frac{r(r-1)}{2}} \\
&=(-1)^{(n+1)(r-1)}\cdot q^{r-1+\frac{(r-1)(r-2)}{2}-\frac{r(r-1)}{2}}\\
&=(-1)^{(n+1)(r-1)}\cdot q^0 = (-1)^{(n+1)(r-1)}.
\end{align*}
Remembering that $\det(M(r,c,n))=0$ in all remaining cases, we are done.
\end{proof} 

For the case when $a+b$ is even, we can now say when $s_{(a,b)}$ is a max-rank element.

\begin{lemma}\label{lem:even_complete_homog}
The Schur polynomial $s_{(a,b)}$, where $a+b=2m$, is a max-rank element on $\mathbf{k}[x,y]/(x^d,y^d)$ if and only if $m\geq d-1$, $d\equiv m \pmod{a-b+1}$ or $d\equiv m+1 \pmod{a-b+1}$.
\end{lemma}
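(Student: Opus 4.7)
The plan is to reduce max-rank to invertibility of a single square matrix, identify that matrix with the Toeplitz family of Theorem~\ref{thm:Toeplitz_det_formula}, and read off the answer from the congruence condition supplied by the theorem.

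First, I would dispose of the trivial range $m\geq d-1$. If $m\geq d$, then for every $i\geq 0$ the target $A_{i+2m}$ vanishes, so multiplication by $s_{(a,b)}$ is trivially max-rank. If $m=d-1$, the only non-trivial multiplication is $A_0\to A_{2d-2}$, where both spaces are one-dimensional. Using $s_{(a,b)}=(xy)^b h_{a-b}(x,y)$ and $a+b=2d-2$, all monomials of $s_{(a,b)}$ except the single term $x^{d-1}y^{d-1}$ vanish in $A$, so $s_{(a,b)}$ acts as the (non-zero) socle element, giving injectivity. Thus $m\geq d-1$ always yields a max-rank element.

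Now assume $m\leq d-2$. By Lemma~\ref{lem:inj_sufficient} together with the symmetry of $\HS(A;t)$, it suffices to check bijectivity of the square map $\cdot\, s_{(a,b)}\colon A_{d-1-m}\to A_{d-1+m}$, which is represented by the $(d-m)\times(d-m)$ matrix $M$ described in the discussion preceding the lemma. That discussion already shows $M$ is Toeplitz with constant diagonals, the first row having $a-m+1$ ones followed by zeros and the first column having $m-b+1$ ones followed by zeros; in the notation of Theorem~\ref{thm:Toeplitz_det_formula}, this is $M(r,c,n)$ with
\[
r=m-b+1,\qquad c=a-m+1,\qquad n=d-m,\qquad r+c-1=a-b+1.
\]
(The only real verification here is that the band structure $b+i-m\leq j\leq m+i-b$ for the nonzero entries is consistent with the first-row/first-column description, which follows directly from expanding $s_{(a,b)}\cdot x^{d-1-m-i+1}y^{i-1}$ in the basis of $A_{d-1+m}$.)

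Finally, I would invoke Theorem~\ref{thm:Toeplitz_det_formula}: the determinant of $M(m-b+1,\,a-m+1,\,d-m)$ is non-zero exactly when $d-m\equiv 0$ or $1\pmod{a-b+1}$, which is equivalent to $d\equiv m$ or $d\equiv m+1\pmod{a-b+1}$. Combining with the $m\geq d-1$ case completes the characterization. The only step that requires care is confirming that the Toeplitz-matrix description and the parameters $r,c$ remain correct in the edge range where $a\geq d-1$ (so that $M$ degenerates toward an all-ones matrix); I expect this to be the main bookkeeping obstacle, but a direct check against the formula in Theorem~\ref{thm:Toeplitz_det_formula} with $n=d-m\geq 2$ shows the determinant still vanishes there, consistent with the lemma. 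No new ideas beyond the already-proved Toeplitz determinant formula are needed.
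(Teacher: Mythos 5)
Your proposal is correct and follows essentially the same route as the paper: reduce to the square map $A_{d-1-m}\to A_{d-1+m}$ (after the trivial range $m\ge d-1$), identify its matrix with the banded Toeplitz matrix $M(m-b+1,\,a-m+1,\,d-m)$, and read off max rank from Theorem~\ref{thm:Toeplitz_det_formula}. The only divergence is the edge range $a\ge d-1$ with $m\le d-2$: the paper treats it by noting $M$ degenerates to the all-ones matrix (hence not max rank) and checking via an explicit inequality chain that no such $d$ satisfies the congruence, whereas your shortcut of invoking the determinant formula in the degenerate band range does work but silently needs the two small checks that the theorem (via its Jacobi--Trudi band description) still covers the case $a-b+1>d-m$, and that then $a-b+1\ge 2(d-m)-1>d-m\ge 2$, so $d-m\not\equiv 0,1\pmod{a-b+1}$, matching the lemma's failure condition.
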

\begin{proof}
If $m\geq d-1$, then $s_{(a,b)}$ has large enough degree that it trivially is a max-rank element. Else, if $m<d-1$ and $a<d-1$, then the discussion at the start of this sections gives that the multiplication by $s_{(a,b)}$ from degree $d-1-m$ to $d-1+m$ is represented by $M(a-m+1,m-b+1,d-m)$ and $s_{(a,b)}$ is a max-rank element if and only if $\det(M(a-m+1,m-b+1,d-m))\neq 0$. By Theorem \ref{thm:Toeplitz_det_formula}, $\det(M(a-m+1,m-b+1,d-m))$ is non-zero if and only if
\[
d-m \equiv 0,1 \pmod{a-b+1} \iff d\equiv m, m+1 \pmod{a-b+1}
\]
as claimed. Finally, note that if $m<d-1$ and $a\geq d-1$, then the multiplication by $s_{(a,b)}$ from degree $d-1-m$ to $d-1+m$ is represented by the all ones matrix and fails to have maximal rank. This is in accordance with our conditions since no $d$ in this interval satisfies the modulo condition. Indeed, the chain of inequalities
\[
m + (a-b+1) = 2a-m+1 \geq 2(d-1) -m +1 > 2(d-1) -(d-1) + 1=d>m+1,
\]
obtained by using our assumptions, gives that $m$, $m+1$ and $d$ all lie in different equivalence classes $\pmod{a-b+1}$.
\end{proof}

Let us now turn to the case when $a+b=2m+1$. For the failures of the max rank property, we will in this case construct an explicit kernel element.

\begin{lemma}\label{lem:Kernel_complete_homog}
If $a+b=2m+1$, $a+b<2d-2$ and $d$ is not equal to $m,m+1$ or $m+2 \pmod{a-b+1}$, then $s_{(a,b)}$ is not a max-rank element on $\mathbf{k}[x,y]/(x^d,y^d)$.
\end{lemma}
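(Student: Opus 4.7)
The plan is to construct an explicit nonzero element $f \in A$ of degree $e \le d - m - 2$ with $s_{(a,b)} \cdot f = 0$ in $A$. Since $e + (e + 2m+1) \le 2d - 3 < 2d - 2$ (the socle degree), injectivity is expected at degree $e$, so by Lemma~\ref{lem:inj_sufficient} the existence of such an $f$ witnesses the failure of maximal rank. Setting $N = a - b + 1$, which is even because $a + b$ is odd, and using $s_{(a,b)} = (xy)^b (x^N - y^N)/(x - y)$, I take the ansatz
\[
f \,=\, (x - y)(xy)^\alpha h_k(x^N, y^N)
\]
for nonnegative integers $\alpha, k$ to be chosen. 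The telescoping identity $(x^N - y^N) h_k(x^N, y^N) = x^{(k+1)N} - y^{(k+1)N}$ gives
\[
s_{(a,b)} \cdot f \,=\, (xy)^{b + \alpha}\bigl(x^{(k+1)N} - y^{(k+1)N}\bigr),
\]
which vanishes in $A$ whenever $b + \alpha + (k+1)N \ge d$.

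To pick $(\alpha, k)$, write $d - b = qN + r$ with $0 \le r \le N - 1$. Using the relations $a = m + N/2$ and $b = m + 1 - N/2$, the hypothesis $d \not\equiv m, m+1, m+2 \pmod N$ becomes $r \in \{0, \dots, N/2 - 2\} \cup \{N/2 + 2, \dots, N - 1\}$, which in particular forces $N \ge 4$. I then set $(\alpha, k) = (r, q - 1)$ when $r$ lies in the lower range and $(\alpha, k) = (0, q)$ when $r$ lies in the upper range. A direct check shows $b + \alpha + (k+1)N = d$ in the lower range and $b + \alpha + (k+1)N = d + N - r \ge d$ in the upper range, so $s_{(a,b)} f = 0$ in $A$ in both cases. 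The potential degeneracy $k = -1$ in the lower range is ruled out by the hypothesis $a + b < 2d - 2$, which forces $d \ge m + 2$.

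The remaining verifications are essentially arithmetic. The degree of $f$ is $1 + 2\alpha + kN$, which simplifies via $qN + r = d - b$ and $b + N/2 = m + 1$ to $d - b - N + r + 1$ in the lower range and $d - b - r + 1$ in the upper range; both quantities are at most $d - m - 2$, with equality exactly at $r = N/2 - 2$ and $r = N/2 + 2$. Finally, $f$ is nonzero in $A$ because its monomial $x^{\alpha + kN + 1} y^\alpha$ of maximal $x$-exponent has both exponents strictly less than $d$.

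The main obstacle is guessing the ansatz and then pinning down the correct pairing of $r$ with $(\alpha, k)$. The three excluded residues $m, m+1, m+2 \pmod N$ turn out to be precisely those for which no such pairing keeps $\deg f$ within $d - m - 2$, which is what makes the lemma's hypothesis sharp; once the ansatz is in place the remaining calculations reduce to elementary bookkeeping with $N = 2(a - m)$.
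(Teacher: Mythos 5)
Your proposal is correct and follows essentially the same route as the paper: both construct the kernel elements $(x-y)(xy)^{\alpha}h_{k}(x^{a-b+1},y^{a-b+1})$, use the telescoping identity $(x^{N}-y^{N})h_{k}(x^{N},y^{N})=x^{(k+1)N}-y^{(k+1)N}$ to see that they are annihilated by $s_{(a,b)}$ in $\mathbf{k}[x,y]/(x^d,y^d)$, and then check via the same degree bookkeeping that such an element exists in a degree where injectivity is expected precisely when $d\not\equiv m,m+1,m+2 \pmod{a-b+1}$. The only difference is organizational (division with remainder $d-b=qN+r$ versus the paper's parametrization $d=(a-b+1)l+b\pm q$), and your arithmetic checks out.
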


\begin{proof}
Assume that $2m+1<2d-2$ so we do not have maximal rank for trivial reasons. The key relation for us is then
\begin{equation}\label{eq:Kernel_complete_homog}
(x-y)\cdot h_{l-1}(x^{a-b+1},y^{a-b+1})\cdot s_{(a,b)}-y^bx^{(a-b+1)l+b}+x^by^{(a-b+1)l+b} = 0.
\end{equation}
To prove \eqref{eq:Kernel_complete_homog}, we can first clear a common factor $(xy)^b$ since $s_{(a,b)}(x,y)=(xy)^bh_{a-b}(x,y)$. Recalling that $h_{k-1}(x,y)=\frac{x^{k}-y^k}{x-y}$, we have
\begin{align*}
&(x-y)\cdot h_{l-1}(x^{a-b+1},y^{a-b+1})\cdot h_{a-b}-x^{(a-b+1)l}+y^{(a-b+1)l}\\
=&(x-y)\cdot \frac{x^{(a-b+1)l}-y^{(a-b+1)l}}{x^{a-b+1}-y^{a-b+1}} \cdot \frac{x^{a-b+1}-y^{a-b+1}}{x-y}-x^{(a-b+1)l}+y^{(a-b+1)l} \\
=&0.
\end{align*}
When $d\leq (a-b+1)l +b$, \eqref{eq:Kernel_complete_homog} exhibits $(x-y)h_{l-1}(x^{a-b+1}, y^{a-b+1})$ as a non-zero kernel element of multiplication by $s_{a,b}$. To see when this is unexpected, write $d=(a-b+1)l+b-q$ for some natural number $q$. Since the kernel element has degree $1+(l-1)(a-b+1)$, we know that injectivity is expected in this degree if 
\[
2(1+(l-1)(a-b+1)) + a + b \leq 2d-2,
\]
which simplifies to 
\begin{equation}\label{eq:q_ineq}
2q+3\leq a-b+1
\end{equation}
Note that this element is indeed non-zero since it has a leading term $x^{(a-b+1)(l-1)+1}$, which a short calculation shows has an exponent strictly less than $d$ as long as $q<a$, which is satisfied if \eqref{eq:q_ineq} holds.
Before examining \eqref{eq:q_ineq} further, by multiplying \eqref{eq:Kernel_complete_homog} by $(xy)^q$, we get a non-zero kernel element of degree $1+(l-1)(a-b+1) + 2q$ when $d=(a-b+1)l + b + q$. Now, this is an unexpected kernel element if
\[
2(1+(l-1)(a-b+1) + 2q) + a + b \leq 2d-2,
\]
which also simplifies to 
\[
2q+3\leq a-b+1.
\]
Again, a short calculation shows that it always has a non-zero leading term. Hence we have an unexpected kernel element when $d=(a-b+1)l + b + q$ and $2|q| + 3 \leq a-b+1$. Since $a-b+1=2(m-b+1)$, we see that $|q|$ can take the values $0,1,\dots, m-b-1$. In other words, $s_{(a,b)}$ is not a max-rank element if the value of $d \pmod{a-b+1}$ lies in the set $\{b-(m-b-1), b-(m-b-2),\dots, b, b+1, \dots, b+(m-b-1)\}$. Since $$b-(m-b-1)\equiv 2b-m+1 + a -b + 1 = a+b-m+2 = m+3 \pmod{a-b+1},$$
this set is the same as $\{m+3, m+4,\dots, b, b+1, \dots, m-1\}$, and therefore covers all cases except for when $d\equiv m, m+1, m+2 \pmod{a-b+1}$ as desired.
\end{proof}

With this, we can now state exactly when $s_{(a,b)}$ is a max-rank element on $\mathbf{k}[x,y]/(x^d,y^d)$.

\begin{theorem}\label{thm:Schur_n=2}
The Schur polynomial $s_{(a,b)}$ is a max-rank element on the algebra $\mathbf{k}[x,y]/(x^d,y^d)$ if and only if either
\begin{itemize}
\item $a+b\geq 2d-2$, or
\item $d\equiv m, m+1 \pmod{a-b+1}$ if $a+b=2m$, or
\item $d\equiv m, m+1, m+2 \pmod{a-b+1}$ if $a+b=2m+1$.
\end{itemize}
\end{theorem}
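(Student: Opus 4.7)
The theorem is proven by assembling Lemma~\ref{lem:even_complete_homog} (even subcase), Lemma~\ref{lem:Kernel_complete_homog} (failures in the odd subcase), a trivial degree-count for $a+b \ge 2d-2$, and a new maximal-rank argument for the odd subcase with $d \equiv m, m+1, m+2 \pmod{a-b+1}$. The trivial case is immediate: either the image of a basis monomial is forced to be zero or it lands in the one-dimensional socle. So the real work is to supply the converse of Lemma~\ref{lem:Kernel_complete_homog}: when $a+b = 2m+1$ and $d$ lies in one of the three special residues, show that $\cdot s_{(a,b)}$ is injective on the only critical degree $A_{d-m-2}$ (by Lemma~\ref{lem:inj_sufficient}).

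My plan is to factor through the identity $(x-y)\, s_{(a,b)} = x^{a+1} y^b - x^b y^{a+1}$. Let
\[
M_x := \cdot\bigl(x^{a+1} y^b - x^b y^{a+1}\bigr) \colon A_{d-m-2} \to A_{d+m};
\]
since both the source and the target have dimension $d-m-1$, $M_x$ is a square map, and by construction $\ker(\cdot s_{(a,b)}) \subseteq \ker M_x$. Setting $\rho = m - b + 1$, so that $a-b+1 = 2\rho$, the matrix of $M_x$ is very sparse: each column has at most two nonzero entries, at rows differing by $2\rho$. Writing a prospective kernel element as $P = \sum_i c_i\, x^{d-m-2-i} y^i$, the condition $M_x(P) = 0$ reduces to the recurrence $c_{i+\rho} = c_{i-\rho}$ for all $i \in [0, d-m-2]$, with $c_j = 0$ outside that range. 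Analyzing this recurrence modulo $2\rho$ yields $\dim \ker M_x = 0$ when $d \equiv m+1 \pmod{2\rho}$, and $\dim \ker M_x = 1$ when $d \equiv m$ or $m+2 \pmod{2\rho}$, with an explicit generator $P_0$ in the latter two cases whose monomials have prescribed exponent residues modulo $2\rho$.

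When $\dim \ker M_x = 0$, injectivity of $M_x$ gives injectivity of $\cdot s_{(a,b)}$ at once. For the two cases with $\dim \ker M_x = 1$, it remains to check that $s_{(a,b)} \cdot P_0 \ne 0$ in $A_{d+m-1}$. Expanding the product into monomials indexed by pairs $(t, l)$ with $t \in [b,a]$, each pair contributes a monomial whose $x$-exponent has the form $t + (\text{something linear in } l)$ with $l$-step $\pm 2\rho$; since the $t$-range has length $2\rho - 1$, the pair $(t, l)$ is recoverable from the $x$-exponent, so distinct pairs give distinct monomials. A short check shows that at least one such monomial (for instance, taking $t=b$ with the extreme value of $l$) has both exponents strictly less than $d$ and hence survives in $A$. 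The surviving monomials are therefore distinct and nonzero, so the sum is nonzero and $P_0 \notin \ker(\cdot s_{(a,b)})$. Combined with $\ker(\cdot s_{(a,b)}) \subseteq \mathbf{k}\, P_0$, this forces $\ker(\cdot s_{(a,b)}) = 0$. The main obstacle is the bookkeeping in this last expansion step; the advantage of the $M_x$-reduction over a direct Toeplitz-minor computation is that it handles the three residue classes uniformly, and in particular detects maximal rank in the class $d \equiv m \pmod{2\rho}$, which is invisible to Theorem~\ref{thm:Toeplitz_det_formula} applied to the natural top or bottom $(d-m-1)\times(d-m-1)$ submatrix (both determinants vanish in that case).
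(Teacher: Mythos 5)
Your proposal is correct, and on the one genuinely hard point --- sufficiency when $a+b=2m+1$ --- it takes a different route from the paper. The paper works directly with the rectangular $(d-m-1)\times(d-m)$ Toeplitz matrix of $\cdot\, s_{(a,b)}\colon A_{d-m-2}\to A_{d+m-1}$ and certifies full rank via Theorem~\ref{thm:Toeplitz_det_formula}, applied once to the square matrix obtained by deleting the last column (covering $d\equiv m+1,m+2\pmod{a-b+1}$) and once to the square matrix obtained by appending an extra row (covering $d\equiv m,m+1$), with a short separate treatment of $a\geq d$; the failures, the even case, and the trivial case are handled by Lemma~\ref{lem:Kernel_complete_homog}, Lemma~\ref{lem:even_complete_homog} and a degree count, exactly as you do. Your composition with $\cdot(x-y)$, via $(x-y)s_{(a,b)}=x^{a+1}y^b-x^by^{a+1}$, replaces the determinant evaluation by an exactly solvable two-term recurrence, and I verified the key claims: the recurrence you state is the right one (the truncation by $x^d,y^d$ only deletes target monomials, so coefficient extraction at surviving monomials is a polynomial-ring computation), $\dim\ker M_x$ is $0$ for $d\equiv m+1$ and $1$ for $d\equiv m,m+2$, and the no-cancellation step is sound because the $2\rho$ values of $t$ occupy distinct residues modulo $2\rho$. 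What your route buys: it bypasses Theorem~\ref{thm:Toeplitz_det_formula} entirely in the odd case and treats the three residues uniformly, and your remark about $d\equiv m$ is accurate --- for $\rho\geq 2$ both maximal square minors of the rectangular matrix are singular there, which is precisely why the paper must enlarge the matrix by a row rather than pass to a minor; what the paper's route buys is economy, since the determinant formula is needed for the even case anyway. One small repair is needed in your last step: the illustrative pair ``$t=b$ with the extreme value of $l$'' does not always survive --- for $d\equiv m+2$ the choice $l=L$ gives $y$-exponent $a+d-m-2\geq d$ as soon as $a\geq m+2$, while $l=0$ requires $a\leq d-1$. The uniform choice $t=m+1$, $l=0$ works in both one-dimensional-kernel cases (giving $x^{d-1}y^{m}$, respectively $x^{d-\rho}y^{a-1}$), and it also covers the degenerate corner $a\geq d-1$, where $M_x$ is the zero map but your residue hypotheses force $d=m+2$ and $P_0=1$.
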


\begin{proof}
If $a+b\geq 2d-2$, we know that $s_{(a,b)}$ is a max-rank element trivially, so assume that $a+b<2d-2$. If $a+b=2m$, then the statement follows from Lemma~\ref{lem:even_complete_homog}. If $a+b=2m+1$, then we know that multiplication by $s_{(a,b)}$ is a max-rank element if and only if it is injective from degree $d-1-m-1$ to degree $d-1+m$. Now, assume that $a<d$. Then this is represented by a $(d-m-1)\times (d-m)$ Toeplitz matrix $M$ where the first row contains $a-m+1$ ones followed by zeroes, and the first column contains $m-b+1$ ones followed by zeroes. Hence, by removing the last column we get the matrix $M(m-b+1,a-m+1,d-m-1)$, which by Theorem~\ref{thm:Toeplitz_det_formula} has maximal rank when $d\equiv m+1, m+2 \pmod{a-b+1}$. Therefore $M$ also has maximal rank in those cases. If we instead add a row at the bottom of $M$ to obtain $M(m-b+1, a-m+1,d-m)$, then this has maximal rank when $d\equiv m,m+1 \pmod{a-b+1}$, and thus $M$ has maximal rank in those cases as well. Moreover, in the remaining cases, Lemma \ref{lem:Kernel_complete_homog} gives that $s_{(a,b)}$ is not a max-rank element. Finally, if $a\geq d$, then multiplication by $s_{(a,b)}$ is represented by the all ones matrix of size $(d-m-1)\times (d-m)$. This does not have maximal rank unless $d=m+2$ where we note that $d\geq m+2$ is a requirement to satisfy $a+b<2d-2$. That $d=m+2$ is also the only case covered by our criterion since if $d\geq m+3$, we have that
\[
m + (a-b+1) = 2a - m \geq 2d - (d-3) > d,
\]
showing that $d$ can not satisfy the congruence in any other instance.
\end{proof}

\begin{corollary}\label{cor:e_and_h_for_n=2}
The elementary symmetric polynomials $e_1=x+y$ and $e_2=xy$ are both max-rank elements on $A=\mathbf{k}[x,y]/(x^d,y^d)$ for all $d\geq 2$. Moreover, the complete homogeneous symmetric polynomial $h_a$ is a max rank element on $A$ if and only if 
\begin{itemize}
\item $a\geq 2d-2$, or
\item $d\equiv m, m+1 \pmod{a+1}$ if $a=2m$, or
\item $d\equiv m, m+1, m+2 \pmod{a+1}$ if $a=2m+1$.
\end{itemize}
\end{corollary}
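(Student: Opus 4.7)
The plan is to recognize each of $e_1$, $e_2$, and $h_a$ as a Schur polynomial $s_{(a,b)}(x,y)$ for suitable $(a,b)$, and then to invoke Theorem \ref{thm:Schur_n=2} directly. Specifically, $e_1 = x+y = s_{(1,0)}(x,y)$, $e_2 = xy = s_{(1,1)}(x,y)$, and $h_a = s_{(a,0)}(x,y)$, the last identity being the one already highlighted just before the two-variable case began.

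For $e_1 = s_{(1,0)}$, I would set $a=1$, $b=0$, so $a+b = 1$ is odd with $m = 0$ and $a-b+1 = 2$. The relevant congruence becomes $d \equiv 0, 1, 2 \pmod 2$, which holds for every $d$, so Theorem \ref{thm:Schur_n=2} immediately gives maximal rank for all $d \geq 2$. For $e_2 = s_{(1,1)}$, I would set $a=b=1$, so $a+b = 2$ is even with $m=1$ and $a-b+1 = 1$. Here every integer is congruent to everything modulo $1$, so the condition is trivially satisfied and $e_2$ is max-rank for all $d$.

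For $h_a = s_{(a,0)}$, the argument is simply a substitution $b = 0$ into Theorem \ref{thm:Schur_n=2}: the first bullet becomes $a \geq 2d-2$, the second bullet becomes $d \equiv m, m+1 \pmod{a+1}$ when $a = 2m$, and the third becomes $d \equiv m, m+1, m+2 \pmod{a+1}$ when $a = 2m+1$, matching the statement of the corollary verbatim.

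Since everything reduces to a direct application of the already-proved Theorem \ref{thm:Schur_n=2}, there is no genuine obstacle; the only thing to be careful about is verifying that the trivial-looking congruences for $e_1$ and $e_2$ do in fact cover every $d \geq 2$, which they do because the moduli $a-b+1$ equal $2$ and $1$ respectively.
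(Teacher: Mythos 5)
Your proposal is correct and is essentially identical to the paper's own proof, which likewise observes $e_1=s_{(1,0)}$, $e_2=s_{(1,1)}$, $h_a=s_{(a,0)}$ and applies Theorem \ref{thm:Schur_n=2} directly; your explicit check that the congruences modulo $2$ and modulo $1$ cover all $d$ is a slightly more detailed but equivalent verification.
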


\begin{proof}
This follows directly from Theorem \ref{thm:Schur_n=2} since $e_1=s_{(1,0)}$, $e_2=s_{(1,1)}$ and $h_a=s_{(a,0)}$.
\end{proof}

\subsection{The case $n \geq 3$}

After obtaining the full characterization for when Schur polynomials define max-rank elements in two variables, the natural question is then what happens when we increase the number of variables. In that case, the behavior is still a mystery to us, but let us mention a partial result.

First, if $\lambda=(1,0,\dots, 0)$, then $s_\lambda = e_1 = h_1 = x_1+\cdots + x_n$ is known to be a max-rank element in $A=\mathbf{k}[x_1,\dots, x_n]/(x_1^d, \dots, x_n^d)$ since $A$ has the strong Lefschetz property. Further, note that a Schur polynomial $s_\lambda$ where $\lambda=(\lambda_1,\dots, \lambda_n)$ is irreducible if and only if $\lambda_n=0$ and $\gcd(\lambda_1+n-1, \lambda_2 + n-2, \dots, \lambda_n)=1$ by \cite[Theorem 3.1]{Irred_Schur}. If $s_\lambda$ is reducible, then the next result shows that it is not a max-rank element on $A$, at least if $d$ is not to small. 

\begin{proposition}\label{prop:Schur_reducible}
If $n\geq 3$ and $s_\lambda$ is reducible, then $s_\lambda$ is not a max-rank element on $\mathbf{k}[x_1,\dots, x_n]/(x_1^d, \dots, x_n^d)$ when $$d\geq \frac{n+|\lambda|}{n-2}.$$
\end{proposition}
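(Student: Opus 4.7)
Applying the irreducibility criterion of \cite{Irred_Schur}, since $s_\lambda$ is reducible and $n \geq 3$, either (i) $\lambda_n \geq 1$, so that $e_n = x_1 \cdots x_n$ divides $s_\lambda$, or (ii) $\lambda_n = 0$ and $g := \gcd(\lambda_1 + n-1, \ldots, \lambda_{n-1}+1) > 1$. The plan is to split into these two cases and, in each, produce an explicit unexpected kernel element of $\cdot s_\lambda$.

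In case (i) the construction is immediate. Take $m = x_1^{d-1} x_2$, which is nonzero in $A_d$ since each of its exponents is below $d$. Because $e_n$ divides $s_\lambda$ and $e_n \cdot m = x_1^d x_2^2 x_3 \cdots x_n$ lies in $(x_1^d, \ldots, x_n^d)$, we get $s_\lambda \cdot m = 0$ in $A$. By the Gorenstein symmetry of the Hilbert series, the map $\cdot s_\lambda: A_d \to A_{d+|\lambda|}$ is expected to be injective precisely when $2d + |\lambda| \leq n(d-1)$, which rearranges exactly to the hypothesis $d \geq (n+|\lambda|)/(n-2)$; so $m$ witnesses an unexpected failure of injectivity and $s_\lambda$ is not maximal rank.

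In case (ii), the substitution $y_i = x_i^g$ in the Vandermonde ratio that defines $s_\lambda$ gives a factorization $s_\lambda = F \cdot G$, with $F = \prod_{i<j} h_{g-1}(x_i, x_j)$ and $G = s_\mu(x_1^g, \ldots, x_n^g)$ for the partition $\mu_j = (\lambda_j + n-j)/g - (n - j)$. The key identity $V \cdot F = V_g := \prod_{i<j}(x_i^g - x_j^g)$, where $V = \prod_{i<j}(x_i - x_j)$, suggests looking for a kernel element of the form $m = V \cdot q$ for a carefully chosen symmetric polynomial $q$. Then $s_\lambda \cdot m = V_g \cdot G \cdot q$, and the alternating structure of $V_g$ (whose monomials carry exponent multiset $\{0, g, 2g, \ldots, g(n-1)\}$ arranged by permutations, with signs producing cancellations) can be exploited so that every surviving monomial has some variable raised to a power at least $d$.

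The main obstacle is case (ii): choosing $q$ uniformly so that the construction works for all $d \geq (n+|\lambda|)/(n-2)$ while keeping $\deg m = \binom{n}{2} + \deg q$ below the expected-injectivity threshold $(n(d-1) - |\lambda|)/2$. A concrete example supporting the approach is $\lambda = (2,1,0)$, $n = 3$, $g = 2$, $d = 6$: the choice $q = p_2 = x_1^2 + x_2^2 + x_3^2$ gives a kernel element $V \cdot p_2$ of degree $5$, which is strictly below the threshold $(s - |\lambda|)/2 = 6$, because the expansion of $V_2 \cdot p_2$ in $\mathbf{k}[x_1, x_2, x_3]$ reduces to the six monomials $\pm x_i^6 x_j^2$ with $i \neq j$, each of which lies in the defining ideal $(x_1^6, x_2^6, x_3^6)$. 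The general argument would proceed by a monomial-exponent count using the shift from $q$ together with the alternating cancellations in $V_g$, the hypothesis $d \geq (n+|\lambda|)/(n-2)$ providing the needed slack.
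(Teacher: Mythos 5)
Your case (i) is fine: since $\lambda_n\geq 1$ every monomial of $s_\lambda$ is divisible by $x_1\cdots x_n$, so $x_1^{d-1}x_2$ (or, as the paper does, the even lower-degree element $x_1^{d-\lambda_n}$) is a nonzero kernel element, and the expected-injectivity count $2d+|\lambda|\leq n(d-1)$ is exactly the hypothesis $d\geq (n+|\lambda|)/(n-2)$. The gap is case (ii), and you acknowledge it yourself: what you give there is a plan plus one worked example, not an argument. Worse, the plan as stated cannot deliver the claimed bound. Any kernel element of the form $V\cdot q$ with $V=\prod_{i<j}(x_i-x_j)$ has degree at least $\binom{n}{2}$, while the degree in which injectivity is expected is at most $\bigl(n(d-1)-|\lambda|\bigr)/2$. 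When $d$ is close to the threshold $(n+|\lambda|)/(n-2)$ and $n$ is large (so $d$ can be as small as $2$ or $3$), this bound is roughly linear in $n$, whereas $\binom{n}{2}$ grows quadratically; so no choice of $q$ can make $\deg(Vq)$ small enough, and your construction only produces failures for much larger $d$ than the proposition asserts. Your example $n=3$, $\lambda=(2,1,0)$, $d=6$ happens to squeak by only because $\binom{3}{2}=3$ is small.

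The paper's proof avoids this by using a kernel element supported on just two variables. Writing $d=gq+r$ with $0<r\leq g$, it takes $f=(x_1-x_2)\,h_q(x_1^g,x_2^g)$, of degree $gq+1\leq d$. Since $g>1$ forces $h_{g-1}(x_1,x_2)$ to divide $V_g=\prod_{i<j}\frac{x_i^g-x_j^g}{x_i-x_j}$, which in turn divides $s_\lambda$, one computes $h_{g-1}(x_1,x_2)\cdot f=x_1^{g(q+1)}-x_2^{g(q+1)}$, which vanishes in $A$ because $g(q+1)\geq d$; hence $s_\lambda\cdot f=0$. The expected-injectivity inequality $2(gq+1)+|\lambda|\leq n(d-1)$ then simplifies to $d\geq\bigl(n+|\lambda|+2(1-r)\bigr)/(n-2)$, which is implied by $d\geq(n+|\lambda|)/(n-2)$ since $r>0$. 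If you want to salvage your approach, replace the full Vandermonde $V$ by this two-variable construction (or otherwise produce a kernel element of degree on the order of $d$, not $\binom{n}{2}$); as written, case (ii) is a genuine gap.
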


\begin{proof}
Assume first that $\lambda_n\neq 0$. It then follows from the definition of Schur polynomials that 
\[
s_\lambda = (x_1\cdots x_n)^{\lambda_n}s_{(\lambda_1-\lambda_n, \lambda_2-\lambda_n,\cdots, 0)}.
\]
Since $\lambda_n\neq 0$, we therefore have that $x_1^{d-\lambda_n}$ is a non-zero kernel element of multiplication by $s_\lambda$ in $A=\mathbf{k}[x_1,\dots, x_n]/(x_1^d, \dots, x_n^d)$. This is an obstruction to being a max-rank element if
\[
2(d-\lambda_n) + |\lambda| \leq n(d-1),
\]
which simplifies to
\[
\frac{n + |\lambda| -2\lambda_n}{n-2}\leq d.
\]
Next, if $\lambda_n=0$, then it again follows from the definition of Schur functions that 
\[
V_g(x_1,\dots, x_n) = \prod_{1\leq i<j\leq n}\frac{x_i^g-x_j^g}{x_i-x_j},
\]
where $g=\gcd(\lambda_1+n-1, \lambda_2 + n-2, \dots, \lambda_n)$, is a factor of $s_\lambda$. Moreover, by \cite[Theorem 3.1]{Irred_Schur}, the polynomial $s_\lambda/V_g$ is either constant or irreducible. Hence, if $\lambda_n=0$ and $s_\lambda$ is reducible, then $g>1$ and $s_\lambda$ has the factor $V_g$. Assume $s_\lambda$ is on that form and write $d=gq+r$ where $0<r\leq g$. We then claim that the polynomial 
\[
f=(x_1-x_2)\cdot h_q(x_1^g, x_2^g)
\]
is a non-zero kernel element of multiplication by $s_\lambda$. It is non-zero since no cancellations occur in $f=(x_1-x_2)\cdot h_q(x_1^g, x_2^g)$ as $g\geq 2$, so for example $x_1x_2^{gq}$ is a non-zero term. To see that it lies in the kernel, note that $h_{g-1}(x_1,x_2)$ is a factor of $V_g$. The calculation 
\[
h_{g-1}(x_1,x_2)\cdot f = \frac{x_1^{g}-x_2^g}{x_1-x_2}\cdot \frac{x_1^{g(q+1)}-x_2^{g(q+1)}}{x_1^g-x_2^g}(x_1-x_2) = x_1^{g(q+1)}-x_2^{g(q+1)}
\]
together with $g(q+1)\geq d$, then shows that $f$ is a kernel element. To see when injectivity is expected in the degree where $f$ lives, we get the inequality
\[
2(gq+1) + |\lambda| \leq n(d-1),
\]
which simplifies to 
\[
\frac{n+|\lambda| + 2(1-r)}{n-2}\leq d.
\]
Since we assumed that $r>0$, it is therefore enough in both cases with 
\begin{equation*}
\frac{n+|\lambda|}{n-2}\leq d.    
\end{equation*}
\end{proof}

\section{The elementary symmetric polynomial}

Let us now turn our focus to the elementary symmetric polynomials 
\[
e_k = \sum_{1\leq i_1< i_2 <\cdots < i_k\leq n}x_{i_1}\cdots x_{i_k}.
\]
From Corollary \ref{cor:e_and_h_for_n=2}, we know exactly for which values of $k$ and $d$ that $e_k$ is a max-rank element on $\mathbf{k}[x,y]/(x^d, y^d)$. However, in three or more valuables, the classification problem is more difficult. Therefore, in the remainder of this section, we consider $A=\mathbf{k}[x_1,\dots, x_n]/(x_1^d,\dots, x_n^d)$ and $e_d$ of the same degree $d$ as the exponents defining the complete intersection. In this case, we can give a conjectural classification and prove that the conditions are necessary.

\begin{conjecture} \label{conj:e}
Let $n \geq 3$ and $d \leq n$. Then $e_d$ is a maximal rank element on $A$
if and only if either
\begin{itemize}
    \item $d=2,$ or
    \item $d=3$ and $n \not \equiv 2 \pmod{3}.$ 
\end{itemize}
\end{conjecture}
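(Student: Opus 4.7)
Since Conjecture \ref{conj:e} is only established in the necessity direction, my plan is to show that $e_d$ fails to be a maximal rank element on $A$ whenever $d \geq 4$, or whenever $d = 3$ and $n \equiv 2 \pmod 3$. By Lemma \ref{lem:inj_sufficient} it suffices, in each case, to exhibit a non-zero kernel element of $\cdot e_d: A_i \to A_{i+d}$ at the largest $i$ with $\dim A_i \leq \dim A_{i+d}$, and Macaulay's inverse system will be the main tool.

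A first observation is a combinatorial characterization of monomial annihilators: a monomial $x_1^{a_1}\cdots x_n^{a_n}$ in $A$ satisfies $x^a \cdot e_d = 0$ if and only if at least $n-d+1$ of the $a_j$'s equal $d-1$. (Each squarefree summand $x_I$ of $e_d$ sends $x^a$ to zero in $A$ exactly when $I$ meets the set $\{j : a_j = d-1\}$.) The minimum degree of such an annihilator is $i_0 := (n-d+1)(d-1)$, and there are $\binom{n}{d-1}$ of them at that degree. A direct comparison shows $i_0 \leq \lfloor (n(d-1)-d)/2 \rfloor$ precisely when $n \leq (2d-1)(d-2)/(d-1)$, which handles the small-$n$ subcases for $d \geq 4$ (for instance $d = n = 4$): there the monomial annihilator already lives at or below the critical degree of Lemma \ref{lem:inj_sufficient}, and immediately witnesses an unexpected kernel.

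For $d \geq 4$ with $n$ above this threshold, the monomial annihilators lie above the critical degree and so cannot directly give failure. I would then dualize via Macaulay's inverse system to the map $e_d \circ : (I^{-1})_{i+d} \to (I^{-1})_i$: any monomial $X^a \in I^{-1}$ supported on fewer than $d$ variables is automatically in $\ker(e_d \circ)$, because the differential operators $\partial^d/\partial X_I$ in $e_d(\partial)$ all demand $d$ distinct positive coordinates in $a$. Counting these ``support-deficient'' monomials at the appropriate inverse-system degree supplies a large family of kernel elements; showing that their span (combined where necessary with further $S_n$-equivariantly constructed elements, using that $e_d$ is $S_n$-invariant) strictly exceeds the expected kernel dimension $\dim A_{i+d} - \dim A_i$ then forces failure. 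The case $d = 3$ with $n \equiv 2 \pmod 3$ is handled by the same inverse-system strategy, with the congruence condition arising from the Hilbert series $\HS(A;t) = (1+t+t^2)^n$ and a precise count of degree-$i$ monomials of each support size.

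The principal obstacle is the explicit dimension count in the large-$n$, $d \geq 4$ regime: one has to marshal enough monomial (and possibly non-monomial) inverse-system kernel elements of varying support to beat the expected gap $\dim A_{i+d} - \dim A_i$. This can be approached by decomposing $(I^{-1})_{i+d}$ according to support size and summing $\binom{n}{s}$ times a partition count, or via an $S_n$-isotypic decomposition exploiting the invariance of $e_d$; either way, the delicate combinatorics of partitions with bounded parts is where the argument actually gets its teeth.
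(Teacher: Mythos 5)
Your proposal only addresses the necessity direction, which is also all the paper proves (Proposition \ref{prop:e-fails-max-rank}); the sufficiency part of Conjecture \ref{conj:e} is left open there too, so that restriction is fine. Your first step is correct and pleasant: a monomial $x^a$ kills $e_d$ in $A$ iff at least $n-d+1$ exponents equal $d-1$, and the resulting kernel element of degree $(n-d+1)(d-1)$ sits in a degree where injectivity is expected exactly when $n\leq (2d-1)(d-2)/(d-1)$, i.e.\ roughly $d\leq n\leq 2d-4$ (so it does cover $n=d=4$). This is an elementary argument the paper does not use, but it only covers a thin range of $n$.

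The genuine gap is the entire complementary regime, which is where the real content lies. Your proposed kernel elements for $e_d\circ$ on the inverse system, namely monomials of support smaller than $d$, cannot do the job. Such monomials have all exponents at most $d-1$, hence degree at most $(d-1)^2$; but to exhibit an unexpected element one either needs it in a degree $D$ with $2D-d\geq n(d-1)$ (where a single non-zero inverse-system element of $(x_1^d,\dots,x_n^d,e_d)$ suffices), which exceeds $(d-1)^2$ as soon as $n>2(d-1)$, or one works in low degrees, where the \emph{expected} kernel of $e_d\circ:(I^{-1})_{i+d}\to(I^{-1})_i$ already has dimension $\dim A_{i+d}-\dim A_i$, of order $n^{i+d}$ for fixed degree and growing $n$, while the support-deficient monomials number only $O(n^{d-1})$; merely producing kernel elements proves nothing unless you exceed the expected count, and this family never can. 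The unspecified ``further $S_n$-equivariantly constructed elements'' are precisely the missing heart of the proof. The paper supplies them explicitly: the single element $F=(x_1\cdots x_{d-1})^{d-1}\,V(d,\dots,2d-1)\cdots V((s-1)d,\dots,n)$, a monomial times a product of Vandermonde determinants on consecutive blocks of $d$ variables, where $(s-1)d+1\leq n+1\leq sd$. Annihilation by $e_d$ follows from expanding $e_d$ over the blocks (every surviving term contains some $e_{i_j}$ of a full block, which kills that block's Vandermonde), annihilation by each $x_i^d$ is immediate from the exponent bounds, and the degree comparison reduces to a quadratic inequality $g(n,d)\geq 0$ that holds for all $d\geq 4$ and, for $d=3$, exactly when $n\equiv 2\pmod 3$. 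Your sketch for $d=3$, $n\equiv 2\pmod 3$ has the same defect: support-deficient monomials there have degree at most $4$, useless for large $n$, and no substitute construction is given.
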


We now prove the necessity part of Conjecture \ref{conj:e}. 

\begin{proposition}\label{prop:e-fails-max-rank}
Let $n \geq 3$ and $1\leq d \leq n$. Then $e_d$ is not a maximal rank element on $A$ if $d\geq 4$, or $d=3$ and $n \equiv 2 \pmod{3}$.
\end{proposition}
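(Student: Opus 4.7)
The plan is to construct, in each case, a nonzero element $F \in A^{-1}$ in Macaulay's inverse system that is annihilated by the operator $e_d(\partial) := e_d(\partial_1, \ldots, \partial_n)$ and whose degree strictly exceeds the last positive coefficient of $[(1-t^d)\HS(A;t)]$. Any such $F$ forces $\dim(A/(e_d))_j > 0$ past the expected socle degree of $A/(e_d)$, which is incompatible with $e_d$ being a maximal rank element.

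The technical heart is the following \emph{Vandermonde Lemma}: for $V_d := \prod_{1 \le i < j \le d}(X_i - X_j)$, one has $e_d(\partial) V_d = 0$, and for any $G \in \mathbf{k}[X_{d+1}, \ldots, X_n]$,
\[
e_d(\partial)(V_d \cdot G) = V_d \cdot e_d(\partial_{d+1}, \ldots, \partial_n) G.
\]
I would prove this by Leibniz expansion: the inner sum $\sum_{|S| = a,\, S \subseteq \{1,\ldots,d\}} \partial_S V_d$ is (up to a combinatorial factor) the $S_d$-antisymmetrization of $\partial_{S_0} V_d$, hence an antisymmetric polynomial in $d$ variables of degree $\binom{d}{2} - a$. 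Since a nonzero antisymmetric polynomial in $d$ variables is divisible by $V_d$ and thus has degree at least $\binom{d}{2}$, this vanishes for every $a \ge 1$, leaving only the $a = 0$ term. In particular, $V_d \cdot G \in \ker e_d(\partial)$ whenever $G$ depends on fewer than $d$ of the variables $X_{d+1}, \ldots, X_n$, since then $e_d(\partial_{d+1}, \ldots, \partial_n)$ vanishes as an operator. The same reasoning applies to the two-variable antisymmetric factor $(X_i - X_j)$ in place of $V_d$, and iterates: products of Vandermondes on disjoint blocks of variables, optionally combined with difference factors and monomial padding on the remaining variables, yield kernel elements whose total degree can be tuned.

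I would then split into cases. For $d = 3$ and $n = 3k + 2$ with $k \ge 1$: take $F = \prod_{j=1}^{k} V_3(X_{3j-2}, X_{3j-1}, X_{3j}) \cdot X_{3k+1}^2 X_{3k+2}^2$; iterated application of the lemma gives $F \in \ker e_3(\partial)$, and since the truncation point of $[(1-t^3)\HS(A)]$ is $\lfloor(2n+3)/2\rfloor = 3k + 3$, the element $F$ of degree $3k+4$ supplies the required excess. For $d = n \ge 4$: use the primal element $x_1^{d-1} \in A_{d-1}$, which is annihilated by $e_d = x_1 \cdots x_n$; injectivity at degree $d-1$ is expected precisely when $2(d-1) + d \le d(d-1)$, i.e.\ $d^2 - 4d + 2 \ge 0$, which holds for $d \ge 4$. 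For $d \ge 4$ and $n > d$: construct $F$ as a product of Vandermondes $V_d$ on disjoint $d$-blocks, possibly together with factors $(X_i - X_j)$ from leftover pairs, plus monomial padding $\prod_{i \in T} X_i^{d-1}$ on at most $d-1$ of the remaining variables; the lemma guarantees $F \in \ker e_d(\partial)$, and arithmetic comparison of $\deg F$ with the truncation $\lfloor (n(d-1) + d)/2 \rfloor$ yields excess in each subrange. The tightest subcase is $n = d+1$, handled by $F = (X_1 - X_2)\prod_{i=3}^n X_i^{d-1}$ with $\deg F = 1 + (d-1)^2$, where the excess condition reduces to $d^2 - 5d + 5 > 0$, true for $d \ge 4$.

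The main obstacle I expect is the Vandermonde Lemma itself: the antisymmetrization argument provides the conceptual content, while the Leibniz bookkeeping needs to be organized carefully. Beyond that, the remaining work is case-by-case arithmetic to verify that the chosen Vandermonde blocks, antisymmetric factors and monomial padding combine to push $\deg F$ past the truncation for every admissible $(n, d)$; this is routine once the construction principle is in hand.
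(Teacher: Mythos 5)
Your proposal follows essentially the same route as the paper: exhibit a nonzero element of the inverse system of $(x_1^d,\dots,x_n^d,e_d)$ built from Vandermonde factors on disjoint blocks of variables together with monomial padding of exponent $d-1$, annihilated by $e_d(\partial)$ because only the trivial term survives in the block decomposition of $e_d$, and of degree beyond the truncation of $(1-t^d)\HS(A;t)$; your $d=3$ element is the paper's up to relabeling, and your antisymmetrization-plus-degree proof of the annihilation lemma is a correct self-contained substitute for the paper's citation. The only real difference is organizational: the paper takes one uniform element $(x_1\cdots x_{d-1})^{d-1}V(d,\dots,2d-1)\cdots V((s-1)d,\dots,n)$ and analyzes a single function $g(n,d)$ by calculus, whereas you split according to $n \bmod d$ and leave that arithmetic as routine — it does go through (residues $r\geq 2$, $r=0$, $r=1$ give margins $r(d-1)-d$, $d^2-4d+2$, $d^2-5d+5$, all nonnegative for $d\geq 4$), with your $n=d+1$ case indeed the tightest.
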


\begin{proof}
The idea of the proof is to find an element $F$ in the inverse system of $(x_1^d,\dots, x_n^d, e_d)$ in high enough degree to show that $e_d$ fails expected surjectivity on $A$ in some degree. Let 
\[
V(n) = \sum_{\sigma\in S_n}\sgn(\sigma)\; x_{\sigma (1)}^{n-1}x_{\sigma (2)}^{n-2}\cdots x_{\sigma (n-1)} = \prod_{1\leq i<j\leq n}(x_i-x_j)
\]
be the Vandermonde polynomial in $n$ variables. We know that $e_d$ annihilates $V(n)$ by \cite[Example 2.28]{Lefschetz_book}. Next, let $e_i(a,a+1,\dots, b)$ denote the elementary symmetric polynomial of degree $i$ in the variables $x_a, x_{a+1},\dots, x_b$ and 
\[
V(a,\dots, b) = \prod_{a\leq i<j\leq b}(x_i-x_j)
\]
the Vandermonde polynomial in the variables $x_a,\dots, x_b$. Write $(s-1)d+1\leq n+1\leq sd$ for some $s\geq 2$. Then
\begin{equation}\label{eq:e_d-expansion}
e_d = \sum_{\substack{i_1+\cdots + i_s=d \\ i_j\geq 0}}e_{i_1}(1,2,\dots, d-1)e_{i_2}(d,d+1,\dots, 2d-1)\cdots e_{i_s}((s-1)d,\dots, n)
\end{equation}
and we claim that this annihilates the form 
\[
F=(x_1\cdots x_{d-1})^{d-1}V(d,\dots, 2d-1)\cdots V((s-1)d,\dots, n).
\]
Indeed, since $e_{i_j}((j-1)d,\dots, jd-1)$ annihilates $V((j-1)d,\dots, jd-1)$ for $j\geq 2$, it suffices that all non-zero terms in \eqref{eq:e_d-expansion} has some index $i_j$ with $j\geq 2$ non-zero. But if $i_j=0$ for all $j\geq 2$, then this forces $i_1=d$, giving the term $e_d(1,\dots, d-1)=0$. Hence $e_d$ annihilates $F$. Further, $x_i^d$ annihilates $F$ for all $i$. This is clear when $i\leq d-1$, while for higher $i$, it follows since each Vandermonde factor in $F$ contains at most $d$ variables and hence monomials with exponent at most $d-1$. Thus $F$ lies in the inverse system of $(x_1^d, \dots, x_n^d, e_d)$.

Next, to see when an element of degree $D$ in the inverse system of $(x_1^d,\dots, x_n^d, e_d)$ is unexpected, we recall that $e_d$ has expected surjectivity to degree $D$ if $D + (D-d) \geq n(d-1)$. Since $F$ has degree $(d-1)^2 + (s-2)\binom{d}{2} + \binom{n-(s-1)d+1}{2}$, the form $F$ is unexpected when
\[
2\left((d-1)^2 + (s-2)\binom{d}{2} + \binom{n-(s-1)d+1}{2}\right) -d \geq n(d-1),
\]
which can be simplified to
\[
g(n,d)=(sd-(n+1))^2+(1-s)d^2 + (n-2)d+1\geq 0.
\]
To see when $g(n,d)$ is non-negative, differentiating with respect to $n$ shows that $g$ has a minimum at $n=sd-d/2-1$. Thus, since
\[
g(sd-\frac{d}{2}-1,d)=\frac{3d^2}{4}-3d+1\geq 0
\]
if $d\geq 4$, we get that $g(n,d)\geq 0$ if $d\geq 4$ and that $e_d$ fails expected surjectivity for those values of $d$. Finally, if $d=3$, then $n\in \{3s-1, 3s-2, 3s-3\}$. Computing $g(n,3)$ at those values gives $g(3s-1,3)=1>0$ and $g(3s-2,3)=g(3s-3,3)=-1<0$. Hence $e_3$ is not a max-rank element if $n$ is of the form $n=3s-1$, or equivalently, if $n\equiv 2 \pmod{3}$.
\end{proof} 

\section{The complete homogeneous symmetric polynomial}

For our final family of symmetric polynomials, we now consider the complete homogeneous symmetric polynomials 
\[
h_k=\sum_{a_1+\cdots + a_n=k}x_1^{a_1}\cdots x_n^{a_n}.
\]
Similarly to the case of the elementary symmetric polynomials, we do have a full classification of when $h_k$ is a max-rank element on $\mathbf{k}[x,y]/(x^d, y^d)$, but in three or more variables, such a classification has not been found. Instead, for the remainder of this section, we consider $A=\mathbf{k}[x_1,\dots, x_n]/(x_1^d,\dots, x_n^d)$ and $h_d$ of the same degree $d$ as the exponents defining the complete intersection. In this case, we can give a conjectural classification and some partial results.

\begin{conjecture} \label{conj:h}
Let $n \geq 3$. Then $h_d$ is a maximal rank element on $A$ if and only if either
\begin{itemize}
    \item $d=2,$
    \item $d=3$ and $n \not \equiv 1 \pmod{3},$ or 
    \item $d=4$ and $n=3$. 
\end{itemize}
\end{conjecture}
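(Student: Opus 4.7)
The plan is to mirror how Proposition \ref{prop:e-fails-max-rank} dispatches Conjecture \ref{conj:e}: prove the necessity direction of Conjecture \ref{conj:h}, namely that $h_d$ fails the maximal rank property in every case outside the three listed ones, by constructing explicit obstructions in the Macaulay inverse system. The sufficiency direction --- showing $h_d$ actually has maximal rank in the three positive cases --- appears considerably harder and would be the main obstacle. Concretely, for each failing pair $(n,d)$, the task is to exhibit a form $F \in \mathbf{k}[X_1,\ldots,X_n]$ with every variable appearing with exponent strictly less than $d$ in every monomial, satisfying $h_d(\partial_1,\ldots,\partial_n)\,F = 0$, and having $\deg F$ large enough that $2\deg F - d \geq n(d-1)$, so that $\cdot h_d: A_{\deg F - d} \to A_{\deg F}$ fails to be surjective.

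The central tool is that $h_d(\partial)$ respects the $S_n$-action and in particular sends antisymmetric polynomials to antisymmetric polynomials. Since any nonzero antisymmetric polynomial in $n$ variables has degree at least $\binom{n}{2}$, any antisymmetric $F$ with $\deg F - d < \binom{n}{2}$ is automatically annihilated by $h_d(\partial)$. The natural first candidate is $F = V(n)\cdot(x_1\cdots x_n)^{d-n}$ for $d \geq n$: it is antisymmetric in all $n$ variables, has maximum per-variable exponent $d-1$, and total degree $\binom{n}{2} + n(d-n)$. The unexpectedness inequality $2\deg F - d \geq n(d-1)$ then reduces to $d(n-1) \geq n^2$, which covers the tail for every fixed $n \geq 3$ and handles most of the $d \geq 5$ failures predicted by the conjecture.

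For the remaining low-$d$ failure cases --- $d = 3$ with $n \equiv 1 \pmod 3$, $d = 4$ with $n \geq 4$, and the intermediate slice just above $d = n$ --- I would use block-Vandermonde constructions as in the proof of Proposition \ref{prop:e-fails-max-rank}. Partitioning the variables into blocks $G_1,\ldots,G_s$ and expanding
\[
h_d(x_1,\ldots,x_n) = \sum_{i_1+\cdots+i_s = d} h_{i_1}(G_1)\cdots h_{i_s}(G_s),
\]
one looks for $F = V(G_1)\cdots V(G_s) \cdot H$ with $H$ symmetric in each block and kept small enough that the per-variable bound is respected. Each factor $h_{i_j}(\partial_{G_j})$ applied to the $G_j$-antisymmetric component is killed once the resulting degree falls below $\binom{|G_j|}{2}$, and one chooses $H$ (for instance a power of $e_{|G_j|}(G_j)$ in each block) so that $\deg F$ crosses the unexpectedness threshold. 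The delicate point, as in the $e_d$ case, is that not every cross term in the expansion vanishes automatically from the antisymmetric-degree count; each borderline parameter choice will require its own small verification that the surviving cross terms genuinely annihilate $F$.

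The main obstacle is the sufficiency direction, i.e.\ showing $h_d$ genuinely has maximal rank in the three positive cases. For $d=2$ one can try to exploit $h_2 = e_1^2 - e_2$ together with the strong Lefschetz property of $A$, and for $d=4$, $n=3$ a direct Hilbert-series verification in the handful of critical degrees should be feasible. The case $d=3$ with $n \not\equiv 1 \pmod 3$ looks hardest and likely requires genuinely new input, since one must rule out \emph{any} inverse-system obstruction --- a task that goes well beyond the antisymmetry arguments available for the necessity direction.
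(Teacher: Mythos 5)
First, a point of calibration: the statement you are proving is stated in the paper as a conjecture, and the paper does not prove it. What the paper does prove (Proposition \ref{prop:h_d_fails_max_rank}) is only the piece of the necessity direction with $d\geq n+2$, using exactly the inverse-system element $F=(x_1\cdots x_n)^{d-n}V(n)$ that you propose; the remaining failure cases ($d=3$ with $n\equiv 1\pmod 3$, $d=4$ with $n\geq 4$, and $5\leq d\leq n+1$) and the entire sufficiency direction are left open, with only an informal remark that block-Vandermonde inverse-system elements should explain the failures. So your plan, which concedes the sufficiency direction and leaves the low-$d$ block constructions as ``each borderline parameter choice will require its own small verification,'' does not go beyond the paper's partial result even in outline.

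Within the one part you do try to make rigorous, there is a concrete gap: your justification that $h_d(\partial)$ kills $F=(x_1\cdots x_n)^{d-n}V(n)$ is the degree count ``an antisymmetric polynomial of degree less than $\binom{n}{2}$ is zero,'' which applies only when $\deg F-d<\binom{n}{2}$, i.e.\ $n(d-n)<d$, i.e.\ $d(n-1)<n^2$. But the unexpectedness inequality you derive for this same $F$ is $d(n-1)\geq n^2$. These two conditions are mutually exclusive, so in every case where $F$ would actually witness a failure of maximal rank, your antisymmetry argument says nothing about $h_d(\partial)F$. The annihilation is true, but it is a genuinely nontrivial fact --- the paper invokes it from the literature (Example 2.87 of the Lefschetz properties book, see also Bergeron and Isogawa): $h_d$ annihilates $(x_1\cdots x_n)^sV(n)$ whenever $s<d$, which holds here since $s=d-n$. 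Without importing or proving that statement, your tail argument collapses; and the same issue resurfaces in your block construction, where killing the cross terms $h_{i_1}(G_1)\cdots h_{i_s}(G_s)$ requires block-wise annihilation results of this stronger type (with per-block exponent bounds), not just antisymmetry degree counts. For the $d=2$ positive case, note also that maximal rank of $e_1^2$ (strong Lefschetz) does not formally transfer to $h_2=e_1^2-e_2$, so even that step needs an argument.
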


We have not been able to prove Conjecture \ref{conj:h}, but for each fixed $n$, we can show that $h_d$ fails to be a max-rank element on $A$ for all but finitely many values of $d$. 

\begin{proposition}\label{prop:h_d_fails_max_rank}
The complete homogeneous polynomial $h_d$ fails to be a max-rank element on $A$ if $n\geq 3$ and $d\geq n+2$.
\end{proposition}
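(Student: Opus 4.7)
The plan is to exhibit the Vandermonde polynomial $V(x_1,\dots,x_n)=\prod_{1\leq i<j\leq n}(x_i-x_j)$ as an unexpected kernel element for multiplication by $h_d$ on $A$. Since every variable appears with exponent at most $n-1<d$ in $V$, the Vandermonde represents a non-zero form of degree $\binom{n}{2}$ in $A$.

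To show $h_d\cdot V=0$ in $A$, I would use the bialternant identity $s_\lambda\cdot V=a_{\lambda+\delta}$, where $a_\mu(x_1,\dots,x_n)=\det(x_i^{\mu_j})$ and $\delta=(n-1,n-2,\dots,0)$. Applied to $\lambda=(d,0,\dots,0)$, and using that $h_d=s_{(d,0,\dots,0)}$, this gives
\[
h_d\cdot V=\det\begin{pmatrix} x_1^{d+n-1} & x_1^{n-2} & \cdots & x_1 & 1 \\ \vdots & \vdots & & \vdots & \vdots \\ x_n^{d+n-1} & x_n^{n-2} & \cdots & x_n & 1 \end{pmatrix}.
\]
Every term in the Leibniz expansion of this determinant contains a factor $x_i^{d+n-1}=x_i^d\cdot x_i^{n-1}$ coming from the first column, and hence lies in the defining ideal $(x_1^d,\dots,x_n^d)$. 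Therefore the determinant vanishes in $A$, and $h_d\cdot V=0$.

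Finally, to conclude that $V$ really violates the maximal rank property, I would check that injectivity is expected for $\cdot h_d\colon A_{\binom{n}{2}}\to A_{\binom{n}{2}+d}$. By Lemma \ref{lem:inj_sufficient} together with the symmetry of $\HS(A;t)$, injectivity is expected exactly when the source and target degrees sum to at most the socle degree, i.e., when $2\binom{n}{2}+d\leq n(d-1)$. This rearranges to $(n-1)d\geq n^2$, and since $n\geq 3$ and $d\geq n+2$ we get $(n-1)d\geq (n-1)(n+2)=n^2+n-2\geq n^2$, as required.

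The computation is short and self-contained; I do not anticipate a substantive obstacle. The only non-obvious step is spotting $h_d\cdot V$ as a determinant via the bialternant formula, after which the vanishing and the degree-range verification are immediate.
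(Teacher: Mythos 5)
Your proof is correct, and it takes a route that is mechanically different from (in fact, Gorenstein-dual to) the one in the paper. You exhibit the Vandermonde $V$ itself as an explicit kernel element of $\cdot h_d$ in degree $\binom{n}{2}$, verifying $h_d\cdot V=a_{(d+n-1,n-2,\dots,1,0)}\in(x_1^d,\dots,x_n^d)$ directly from the bialternant formula, and then check that injectivity is expected there, which amounts to $(n-1)d\geq n^2$, i.e.\ $d\geq n+2$ for integers. The paper instead works on the surjective side via Macaulay's inverse system: it shows $F=(x_1\cdots x_n)^{d-n}V(n)$ is annihilated by $x_1^d,\dots,x_n^d$ and $h_d$ (citing known apolarity results for $h_d$ and powers of $x_1\cdots x_n$ times the Vandermonde) and checks that surjectivity is expected onto $\deg F=\binom{n}{2}+n(d-n)$, which reduces to the very same inequality; note $n(d-1)-\deg F=\binom{n}{2}$, so the two witnesses are dual to each other under the Gorenstein pairing. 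Your version is more self-contained and elementary (no inverse system, no external citation, and an explicit kernel element), while the paper's inverse-system formulation matches the method used for $e_d$ in the previous section and the conjectured dual generators at the end of Section 6. One small point: the fact that injectivity is expected when the source and target degrees sum to at most the socle degree follows from the symmetry and unimodality of $\HS(A;t)$ (as the paper uses in the proof of Proposition \ref{prop:Inj_char}), not from Lemma \ref{lem:inj_sufficient} itself, which concerns the equality of ranks in dual degrees; this is only a matter of citing the right justification, not a gap in the argument.
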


\begin{proof}
The idea here is the same as in the proof of Proposition \ref{prop:e-fails-max-rank}, we want to find a form $F$ of high enough degree in the inverse system of $(x_1^d, \dots, x_n^d, h_d)$ to show that $h_d$ fails expected surjectivity in some degree on $A$. By \cite[Example~2.87]{Lefschetz_book}, $h_d$ annihilates $(x_1\cdots x_n)^sV(n)$ if $d>s$ where $V(n)$ is the Vandermonde polynomial in $n$ variables. See also \cite[Proposition 2.1]{Bergeron_duals} or \cite{Satoru_Isogawa2025} for more details. Further, the largest $s$ such that $x_i^d$ annihilates $(x_1\cdots x_n)^sV(n)$ for all $i$ is $d-n$. So let $F=(x_1\cdots x_n)^{d-n}V(n)$. To see when an element of degree $D$ in the inverse system of $(x_1^d,\dots, x_n^d, h_d)$ is unexpected, we recall that $h_d$ has expected surjectivity to degree $D$ if $D + (D-d) \geq n(d-1)$. Since $F$ has degree $\binom{n}{2} + n(d-n)$, the form $F$ is unexpected when
\[
2\left(\binom{n}{2} + n(d-n)\right) -d \geq n(d-1),
\]
which can be simplified to
\[
d\geq n+1+\frac{1}{n-1}.
\]
Since $d$ and $n\geq 3$ are integers, we therefore get that $F$ is unexpected when $d\geq n+2$.
\end{proof}

Similarly to the proof of Proposition \ref{prop:e-fails-max-rank}, we believe that all failures of the max-rank property for $h_d$ in Conjecture \ref{conj:h} can be explained by unexpected elements in the inverse system of $(x_1^d,\dots, x_n^d, h_d)$ of the form
\[
\prod_{j=1}^{s}(x_{i_j}\cdot x_{i_j+1}\cdots x_{i_{j+1}-1})^{r_j}\cdot V(i_j,\dots,i_{j+1}-1)
\]
where $V(a,\dots, b)$ is the Vandermonde polynomial in the variables $x_a,\dots, x_b$ and $s,i_1,\dots, i_{s+1}, r_1,\dots, r_s$ are some suitably chosen numbers. 

\section{Outlook and further observations}

In this paper, we classified several instances of when specific symmetric polynomials are max-rank elements on an equigenerated monomial complete intersection. However, it is still an open question for most common symmetric polynomials to classify on which complete intersections they are max-rank elements. In this section, we now state some natural next questions in this direction and give some further conjectures based on computations done in Macaulay2 \cite{M2}. 

The main and most ambitious open question is to classify when Schur polynomials are max-rank elements on monomial complete intersections.

\begin{question}
Given a partition $\lambda$ with $k$ parts, find all integers $n\geq k$ and $d\geq 2$ such that the Schur polynomial $s_{\lambda}(x_1,\dots, x_n)$ is a max-rank element on the algebra $\mathbf{k}[x_1,\dots, x_n]/(x_1^d,\dots, x_n^d)$. 
\end{question}

It would be interesting to find other properties of the partition $\lambda$ that force success or failure of being a max-rank element for some values of $n$ and $d$ similar to Proposition \ref{prop:Schur_reducible}. Another possible way forward could be to classify the asymptotic behavior. For example, fixing a partition $\lambda$ and a number of variables $n$, can one say something about when $s_{\lambda}$ is a max-rank element on a complete intersection defined by $(x_1^d,\dots, x_n^d)$ when $d$ is large enough? We have the following conjecture in the three variable case.

\begin{conjecture}
Let $\lambda$ be a partition with at most three parts, $s_\lambda$ a Schur polynomial and $A=\mathbf{k}[x_1,x_2,x_3]/(x_1^d, x_2^d, x_3^d)$. Consider the two sets
\begin{align*}
F&=\{(\lambda_1, \lambda_2, \lambda_3) \; | \; \lambda_3\neq 0  \text{ or } \gcd(\lambda_1+2, \lambda_2+1)>1\}\cup \{(s,s,0) \; | \; s\geq 5\},\\
P&=\{(2,2,0), (3,3,0), (4,4,0)\}.
\end{align*}
Then the behavior of if $s_{\lambda}$ is a max-rank element or not on $A$ has the following description for all $d$ large enough.
\begin{itemize}
\item It fails if $\lambda\in F$.
\item It becomes periodic in $d$ if $\lambda\in P$, with the condition of being a max-rank element being 
\begin{itemize}
\item[$\star$] $d\equiv 2,4,5 \pmod{6}$ for $\lambda=(2,2,0)$,
\item[$\star$] $d\equiv 6 \pmod{8}$ for $\lambda=(3,3,0)$, and
\item[$\star$] $d\equiv 8 \pmod{10}$ for $\lambda = (4,4,0)$.
\end{itemize}
\item It is a max-rank element if $\lambda \notin F\cup P$.
\end{itemize}
\end{conjecture}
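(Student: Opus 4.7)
The plan is to tackle the three bullet points of the conjecture separately, since they describe distinct asymptotic regimes in $d$.

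For the failure part $\lambda\in F$: when $\lambda_3\neq 0$, or when $\lambda_3=0$ and $\gcd(\lambda_1+2,\lambda_2+1)>1$, the Schur polynomial $s_\lambda$ is reducible by \cite[Theorem 3.1]{Irred_Schur}, and Proposition~\ref{prop:Schur_reducible} produces an obstruction to the max-rank property as soon as $d\geq 3+|\lambda|$, which holds for $d$ large. The genuinely new sub-case is $\lambda=(s,s,0)$ with $s\geq 5$: here $\gcd(s+2,s+1)=1$, so $s_{(s,s,0)}$ is irreducible and Proposition~\ref{prop:Schur_reducible} does not apply. The plan here is to use the Jacobi-Trudi identity
\[
s_{(s,s,0)}=h_s^2-h_{s-1}h_{s+1}
\]
in three variables, and to construct an explicit element of the inverse system of $(x_1^d,x_2^d,x_3^d,s_{(s,s,0)})$ whose degree exceeds the expected socle threshold. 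A natural ansatz is a form $(X_1X_2X_3)^r\cdot V(3)$ perturbed by an additional symmetric factor tuned so that the two contributions from $h_s^2$ and $h_{s-1}h_{s+1}$ cancel; verifying annihilation then reduces to a combinatorial identity in the spirit of the differential calculations appearing in Propositions~\ref{prop:e-fails-max-rank} and~\ref{prop:h_d_fails_max_rank}.

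For the periodic part $\lambda\in P=\{(2,2,0),(3,3,0),(4,4,0)\}$: each is a single partition, so one fixes $s\in\{2,3,4\}$ and studies the multiplication by $s_{(s,s,0)}$ in the middle degree of $A$ directly. Following the strategy of Section~4 for $n=2$, I would pass to the $S_3$-symmetric subspace (or use a refined grading analogous to the $k$-grading of Section~3) so that the multiplication map is represented by a structured matrix whose determinant can be evaluated at roots of unity via the $q$-Lucas reduction used in the proof of Theorem~\ref{thm:Toeplitz_det_formula}. The stated moduli $6,8,10$ equal $2s+2$, suggesting that the relevant cyclotomic factor is $\Phi_{2s+2}$ and that the periodicity emerges from the vanishing of this determinant along a union of arithmetic progressions modulo $2s+2$.

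For the max-rank part $\lambda\notin F\cup P$: here $s_\lambda$ is irreducible with $\lambda_3=0$, and $(\lambda_1,\lambda_2)$ is not of the form $(s,s)$. The goal is to show that for all sufficiently large $d$ the multiplication by $s_\lambda$ on $A$ has maximal rank in the critical degree. A plausible strategy is a degeneration/semi-continuity argument: exhibit a one-parameter family connecting $s_\lambda$ to a polynomial of known max rank (for instance, a generic linear combination with $(x_1+x_2+x_3)^{|\lambda|}$, or with a suitable $p_{k,3}$ covered by Theorem~\ref{thm:most_failures}), and argue that the rank cannot drop away from the specific obstructions already accounted for in Parts~I and~II. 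Alternatively, a direct principal specialization in the spirit of Theorem~\ref{thm:Toeplitz_det_formula} might certify injectivity by a non-vanishing determinant computation tied to a Schur evaluation at roots of unity.

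The main obstacle, in my view, is this third part: all successful arguments in the paper so far certify failure via a single unexpected form, whereas here one must rule out obstructions uniformly across an infinite family of partitions and an infinite family of degrees. The sub-case $(s,s,0)$ with $s\geq 5$ inside Part~I is the next most delicate, because the reducibility trick is unavailable and one must hand-craft an obstructing form from the Jacobi-Trudi formula above.
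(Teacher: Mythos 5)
This statement is posed in the paper as a conjecture, supported only by Macaulay2 experiments; the paper contains no proof of it, so there is nothing to compare your argument against except the paper's own partial results. Measured as a proof, your proposal establishes only one piece: for $\lambda_3\neq 0$ or $\gcd(\lambda_1+2,\lambda_2+1)>1$ the polynomial $s_\lambda$ is reducible, and Proposition~\ref{prop:Schur_reducible} with $n=3$ indeed gives failure for all $d\geq 3+|\lambda|$. That part is correct and is exactly the mechanism behind the first half of the set $F$. Everything else is an ansatz rather than an argument, and each ansatz has a concrete problem.

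For $\lambda=(s,s,0)$ with $s\geq 5$ you propose a perturbed form $(X_1X_2X_3)^r V(3)$ annihilated by $s_{(s,s,0)}$, but you neither specify the perturbation nor verify annihilation, and it is not clear such a form of sufficiently high degree exists; this is precisely the point where the reducibility trick breaks and where a genuinely new construction would be needed. For the periodic part, the Toeplitz/$q$-Lucas machinery of Theorem~\ref{thm:Toeplitz_det_formula} does not transfer: the paper itself observes in Section~7 that for $n\geq 3$ the multiplication matrices are no longer Toeplitz, and the $k$-grading of Section~3 is unavailable because $s_{(s,s,0)}$ is not a polynomial in $x_1^k,x_2^k,x_3^k$; restricting to the $S_3$-invariant subspace is also insufficient, since maximal rank must be certified on all of $A_i$, not on an invariant summand. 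The most serious flaw is in the positive direction: semicontinuity runs the wrong way. Placing $s_\lambda$ in a family with $(x_1+x_2+x_3)^{|\lambda|}$ or with $p_{k,3}$ shows only that the \emph{generic} member of the family has maximal rank; it gives no information about the special member $s_\lambda$, so this strategy cannot certify the third bullet, nor the exact residues modulo $6$, $8$, $10$ in the second. As it stands the proposal is a reasonable research plan that reproves the already-known portion of $F$ and leaves the conjecture open.
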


One can of course also study other properties of the ideals $(x_1^d,\dots, x_n^d, p)$ for some symmetric polynomial $p$. For example, following Conjecture \ref{conj:e} and Conjecture~\ref{conj:h}, the Hilbert series for the ideals
$(x_1^3,\ldots,x_n^3, h_3)$ and 
$(x_1^3,\ldots,x_n^3, e_3)$ should agree when $n$ is a multiple of $3$. We believe in fact that their Betti tables also agree.

\begin{conjecture}
The ideals  $(x_1^3,\ldots,x_n^3, h_3)$ and 
$(x_1^3,\ldots,x_n^3, e_3)$ have the same Betti numbers if and only if $n=3a$.
\end{conjecture}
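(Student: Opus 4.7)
The claim has two directions. For the forward direction (equal Betti numbers implies $n \equiv 0 \pmod 3$), I would use that the Betti table determines the Hilbert series, so it suffices to show that the two Hilbert series differ whenever $n \not\equiv 0 \pmod 3$. Granting Conjecture \ref{conj:e} and Conjecture \ref{conj:h}, when $n \equiv 1 \pmod 3$ the element $e_3$ is a max-rank element on $A$ while $h_3$ is not, so $\HS(R/(x_1^3,\dots,x_n^3,e_3);t)$ equals the truncation $[(1-t^3)\HS(A;t)]$ but $\HS(R/(x_1^3,\dots,x_n^3,h_3);t)$ strictly exceeds it; when $n \equiv 2 \pmod 3$ the roles reverse. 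In either case the Betti tables cannot coincide. Note this half is essentially tied to the two conjectures; from the partial results Proposition \ref{prop:e-fails-max-rank} and Proposition \ref{prop:h_d_fails_max_rank} alone, one only recovers the forward implication in a range of $d$ values.

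For the reverse direction (if $n = 3a$ then the Betti tables agree), my starting point would be an explicit relation between $h_3$ and $e_3$ inside $A = R/(x_1^3,\dots,x_n^3)$. Applying Newton's identities and using $p_3 = 0$ in $A$ yields
\[
3 e_3 = e_1 e_2 - e_1 p_2, \qquad 3 h_3 = e_1(h_2 + p_2).
\]
Substituting $h_2 = e_1^2 - e_2$ and $p_2 = e_1^2 - 2 e_2$ (and hence $e_1^3 = 3 e_1 e_2 - 3 e_3$ in $A$) gives the clean identity
\[
h_3 + 2 e_3 = e_1 e_2 \quad \text{in } A.
\]
Thus $(x_1^3,\dots,x_n^3,h_3)$ and $(x_1^3,\dots,x_n^3,e_3)$ differ, modulo the complete intersection, only by a cubic already in $A$; and in $A$ we have $(\overline{h_3}) + (\overline{e_3}) = (\overline{e_3},\overline{e_1}\,\overline{e_2})$.

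From here I would try to construct an explicit graded $S_n$-equivariant isomorphism of $R$-modules $R/(x_1^3,\dots,x_n^3,e_3) \to R/(x_1^3,\dots,x_n^3,h_3)$ that exists precisely when $3 \mid n$. Via Macaulay duality, this amounts to an $R$-linear identification of the annihilators of $e_3$ and $h_3$ in the inverse system of $(x_1^3,\dots,x_n^3)$, which is the span of the monomials $X_1^{a_1}\cdots X_n^{a_n}$ with $0 \leq a_i \leq 2$. A natural strategy is to decompose each inverse system into its $S_n$-isotypical components, verify that corresponding components have the same character in every degree, and then lift these vector space isomorphisms to a compatible $R$-module map. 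The divisibility $3 \mid n$ should enter through the factor $3$ appearing in the Newton identity $3 e_3 = e_1 e_2 - e_1 p_2$ and the obstruction it creates when $3 \nmid n$.

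The main obstacle is this reverse direction: equality of Betti numbers is strictly stronger than equality of Hilbert series and cannot be read off from the max-rank conjectures alone. A mapping-cone construction starting from the Koszul resolution of $(x_1^3,\dots,x_n^3)$ reduces the computation of the Betti numbers of the two quotients to understanding $\ann_A(\overline{e_3})$ and $\ann_A(\overline{h_3})$ together with their minimal $R$-free resolutions; producing a precise comparison between these two annihilator modules exactly when $n = 3a$ appears to be the crux of the conjecture, and is where a cleaner structural result (perhaps an $S_n$-equivariant change of presentation using the identity $h_3 + 2e_3 = e_1 e_2$) would be most valuable.
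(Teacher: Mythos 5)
This statement is one of the paper's open conjectures: the authors give no proof of it, so there is nothing to compare your argument against, and your proposal does not close the gap either. Your forward direction is explicitly conditional on Conjecture \ref{conj:e} and Conjecture \ref{conj:h}, of which the paper only proves the ``failure'' halves: Proposition \ref{prop:e-fails-max-rank} shows $e_3$ fails maximal rank when $n\equiv 2 \pmod 3$, while Proposition \ref{prop:h_d_fails_max_rank} says nothing for $d=3\leq n$. Thus neither the max-rank property of $e_3$ for $n\equiv 1\pmod 3$ nor the failure of $h_3$ there is available, and even the proven failure of $e_3$ for $n\equiv 2\pmod 3$ does not by itself show that the two Hilbert series (hence Betti tables) differ, since one would still need to know that $h_3$ behaves differently in that case. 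So as it stands the forward implication is a conditional reduction, not a proof.

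For the reverse direction the identity $h_3+2e_3\equiv e_1e_2 \pmod{(x_1^3,\dots,x_n^3)}$ is correct (indeed $h_3+2e_3-e_1e_2=p_3$ in $R$), but it holds for every $n$ and so cannot by itself detect the condition $3\mid n$. More seriously, the proposed mechanism --- a graded $S_n$-equivariant isomorphism of $R$-modules $R/(x_1^3,\dots,x_n^3,e_3)\to R/(x_1^3,\dots,x_n^3,h_3)$ --- cannot exist unless the two ideals are literally equal, because any $R$-module isomorphism of cyclic modules identifies their annihilators, which are exactly these ideals. They are not equal: already for $n=3$ one has $h_3\equiv e_1e_2-2e_3$, and $e_1e_2=\sum_{i\neq j}x_i^2x_j+3x_1x_2x_3$ does not lie in $(x_1^3,x_2^3,x_3^3,x_1x_2x_3)$ since none of the terms $x_i^2x_j$ does, so $h_3\notin(x_1^3,x_2^3,x_3^3,e_3)$. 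Hence the conjecture must be about equality of graded Betti numbers of non-isomorphic modules, which is strictly weaker than what you aim to construct; the paper's own observation that $(x_1+\cdots+x_6)^3$ is a max-rank cubic on the $n=6$ algebra yet yields a different Betti table confirms that Hilbert-series or max-rank information cannot settle this direction, as you acknowledge. A workable version of your plan would instead compare $\ann_A(e_3)$ and $\ann_A(h_3)$ (e.g.\ via your mapping-cone remark) and show only that these modules have the same graded Betti numbers when $n=3a$, not that they, or the quotients, are isomorphic.
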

Here it is not enough that $e_3$ and $h_3$ are symmetric polynomials with the max-rank property on the algebra $A=\mathbf{k}[x_1,\dots, x_n]/(x_1^3,\dots, x_n^3)$. We know that the polynomial $(x_1+\cdots + x_n)^3$ is also a max-rank element on $A$, but for $n = 6$, the Betti table for 
$(x_1^3,\ldots,x_6^3, (x_1+\cdots x_6)^3)$ differs from the others. A third Betti table from a max-rank element can also be found from
$(x_1^3,\ldots,x_6^3, f)$, where $f$ is generic, but not necessarily symmetric, of degree three.

Further, when a symmetric polynomial $p$ is not a max-rank element, the amount by which it fails do in many cases seem to have nice combinatorial interpretations. Let $A=\mathbf{k}[x_1,\dots, x_n]/(x_1^d,\dots, x_n^d)$ as usual. By Proposition \ref{prop:Failing_symmetric} and the fact that $A$ has a unimodal Hilbert series, the series given by 
$\HS(A/(p);t)-[(1-t^{\deg(p)}\HS(A;t))],$
where $[(1-t^{\deg(p)}\HS(A;t))]$ is the expected Hilbert series, is symmetric. Moreover, as $d$ increases, experiments suggests that the coefficients of this difference stabilizes. For example, when $n=5$, $e_4$ fails to be a max-rank element on $A$ for $d=5,7,9$ by the amounts $10t^{11}(1+5t+t^2), 10t^{15}(1+5t+14t^2+5t^3+t^4)$ and $10t^{19}(1 + 5t + 14t^2 + 30t^3 + 14t^4 + 5t^5 + t^6)$. The sequence of interest here thus starts $10\cdot(1,5,14,30,\dots,)$ which we guess is ten times the square pyramidal numbers $a_i$ where $a_i=\sum_{j=1}^i j^2$. 

Other interesting cases are for example when considering $e_5$ for $n=6$, in which case the difference has coefficients looking like $15\cdot\binom{i+3}{4}$, or $e_6$ when $n=7$, in which case the difference has coefficients giving the sequence $21\cdot(1,6,20,51,111,\dots,)$. To our surprise, the sequence $1,6,20,51,111,\dots,$ is the starting segment of a unique sequence in the OEIS \cite{OEIS_2018} given by A266760, Growth series for affine Coxeter group $D_5$, thus hinting at a possible connection to Coxeter groups.

\begin{figure}[h]
\includegraphics[scale=0.33]{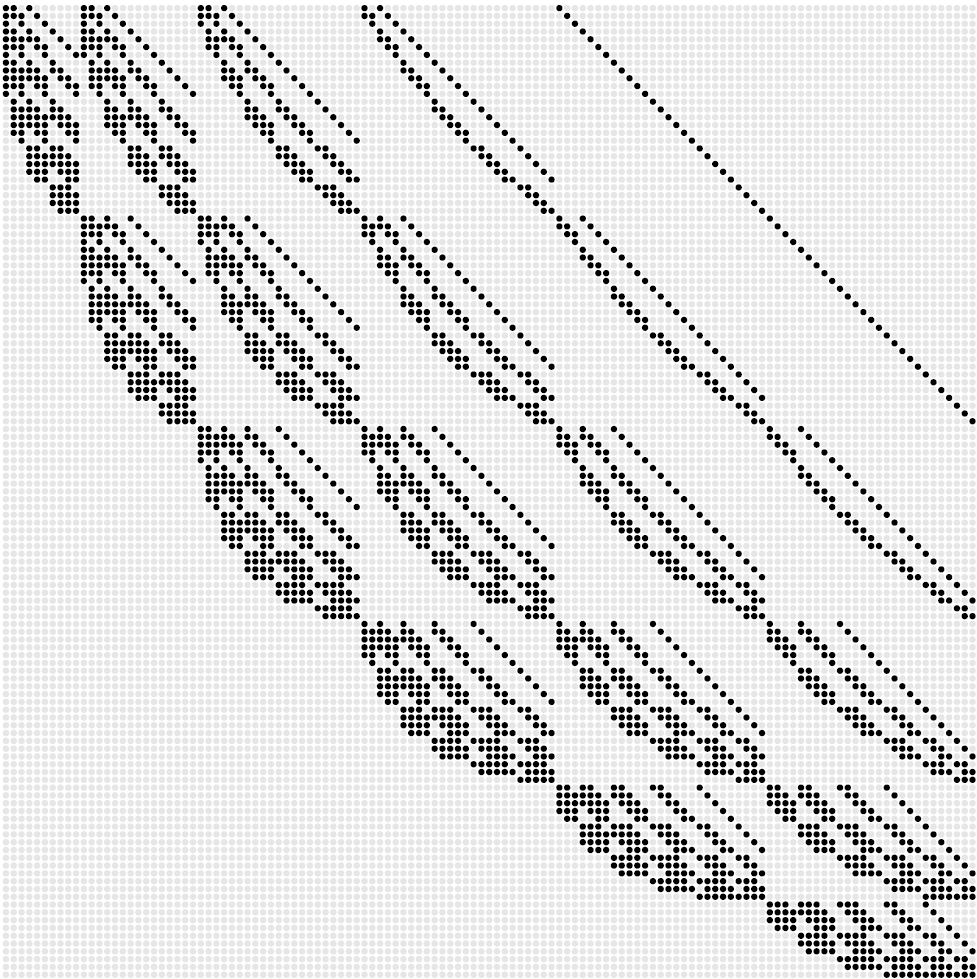}
\caption{Multiplication matrix when $n=4, k=4$ and $d=6$.}
\label{fig:Matrix_4,4,6}
\end{figure}
Finally, we would like to mention some connections to combinatorial design. Consider a fixed set $X$ with $n$ elements. We can then construct a matrix $W$ with rows indexed by all subsets $T$ of $X$ having $i$ elements and columns indexed by all subsets $K$ of $X$ having $i+k$ elements where the entry $(T,K)$ of $W$ is one if $T\subseteq K$ and zero otherwise. When $d=2$, a matrix representing multiplication by $h_k$ on $\mathbf{k}[x_1,\dots, x_n]/(x_1^2, \dots, x_n^2)$ from degree $i$ to degree $i+k$ is then the same as the matrix $W$. The matrices $W$ are well understood and have been studied extensively, see for example \cite{Wilson} and the references therein. 
If instead $X$ is a multiset, then one can consider the same construction, looking at matrices $W$ representing inclusion of multisubsets of $X$ of two different sizes. In the case that $X$ is the multiset containing the elements $1,\dots, n$, each repeated $d-1$ times, we get that multiplication by $h_k$ on $\mathbf{k}[x_1,\dots, x_n]/(x_1^d, \dots, x_n^d)$ from degree $i$ to $i+k$, can be represented by such a matrix $W$ defined by inclusion of multisubsets of size $i$ into multisubsets of size $i+k$. Despite the considerable amount of literature for the case when $X$ is a set, we have been unable to find any prior study on these matrices when $X$ is a multiset. 
When $n=2$, we know that they are Toeplitz matrices and have a formula for their determinants in Theorem \ref{thm:Toeplitz_det_formula}. For larger values of $n$, these matrices are no longer Toeplitz, but interesting patterns can still be found in them. See Figure \ref{fig:Matrix_4,4,6} and Figure \ref{fig:Matrix_7,6,3} for some illustrations of the matrices $W$ coming from the specified values of $n, k,$ and $d$. The value $i$ is chosen such that $W$ becomes a square matrix. Each black dot in the figure indicates a one in the matrix and each grey dot indicates a zero. 

\begin{figure}[h]
\includegraphics[scale=0.4]{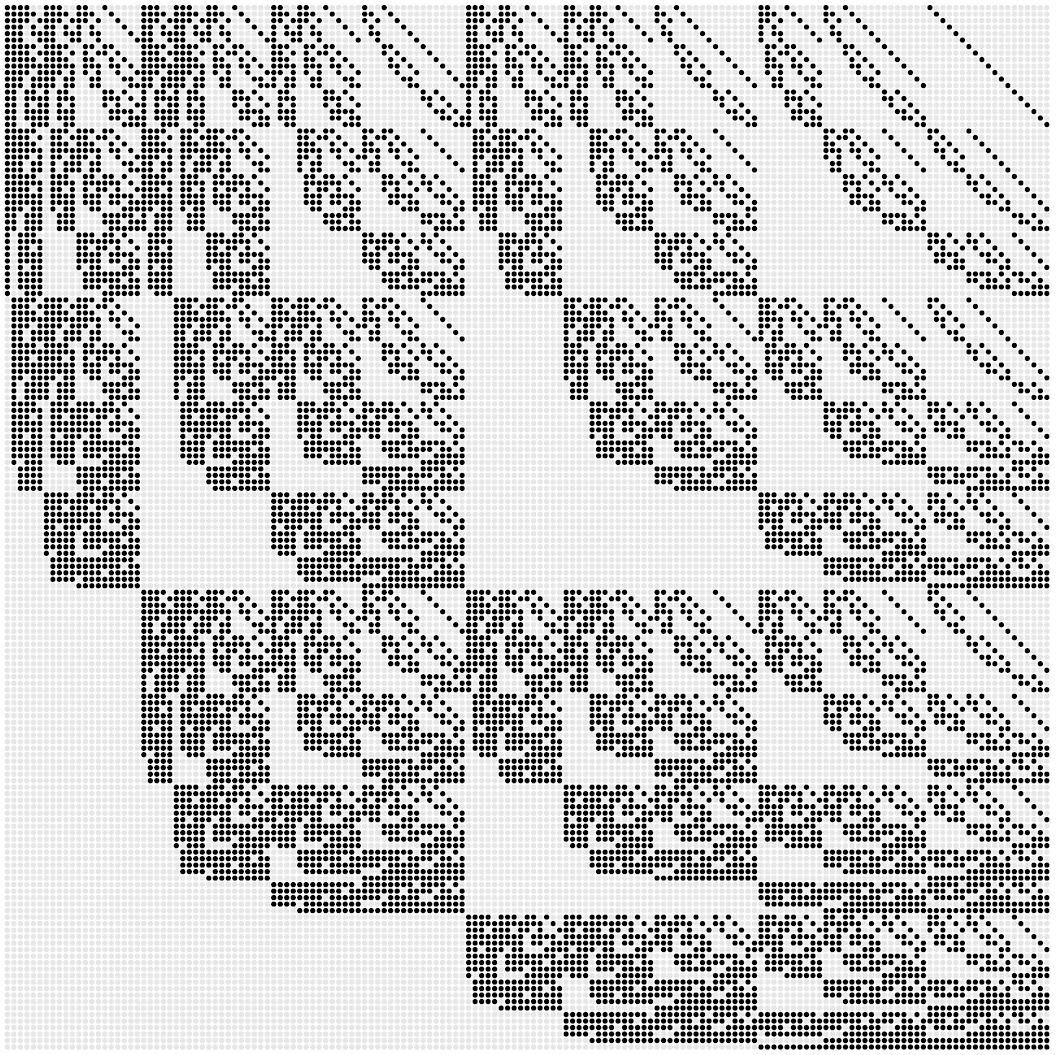}
\caption{Multiplication matrix when $n=7, k=6$ and $d=3$.}
\label{fig:Matrix_7,6,3}
\end{figure}

\section*{Acknowledgments}
Computer experiments in Macaulay2 were used extensively in the creation of this article. The authors would like to thank Per Alexandersson for valuable discussions on the theory of symmetric polynomials. The second author was funded by the Swedish Research Council VR2022-04009.

\bibliographystyle{plain}
\bibliography{references}

\appendix

\section{Artinian Gorenstein algebras}
We include here an appendix of some facts concerning multiplication by an element in an Artinian Gorenstein algebra that has been used earlier in the text. These properties should be well known to experts, but as we were unable to locate suitable references for them, we provide proofs of them here for completeness.

Let us begin by recalling that
an Artinian algebra $A=A_0\oplus A_1 \oplus \cdots \oplus A_e$ over a field $\mathbf{k}$ is \emph{Gorenstein} if it has a $1$-dimensional socle. That is, if $\dim_{\mathbf{k}}(A_e)=1$ and $\{f \; | \; fx_i=0 \text{ for all }i\}=A_e$.
There are many other equivalent definitions of an Artinian Gorenstein algebra. To state some more of them, recall Macaulay's inverse system from the preliminaries. Let $R=\mathbf{k}[x_1,\ldots,x_n]$ and consider the dual $S=k[X_1,\ldots,X_n]$, where $x_i$ acts on $F \in S$ as $x_i \circ  F = \frac{\partial F}{\partial X_i}$. For such an $F\in S$, the \emph{annihilator} $\mathrm{Ann}(F)=\{g\in R \; | \; g \circ F = 0\}$ is an ideal in $R$. 

We now collect definitions of Gorenstein algebras that we will use later.

\begin{proposition}
Let $A$ be an Artinian algebra over a field $\mathbf{k}$. Then the following are equivalent.
\begin{enumerate}
\item A is Gorenstein with socle degree $e$.
\item Multiplication on $A$ induces a perfect pairing $A_i \times A_{e-i} \to A_e\simeq \mathbf{k}$ for all $i=0,1,\dots, e$.
\item There is a form $F$ of degree $e$ such that $A\simeq \mathbf{k}[x_1,\dots, x_n]/(\mathrm{Ann}(F))$.
\end{enumerate}
\end{proposition}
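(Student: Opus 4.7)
The plan is to treat condition (1) as the hub, proving (1) $\Leftrightarrow$ (2) directly and (1) $\Leftrightarrow$ (3) via Macaulay's inverse system.

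For (1) $\Rightarrow$ (2), given a nonzero $a \in A_i$ I would look at the principal submodule $aA \subseteq A$. Being a nonzero finite-dimensional module over the local Artinian ring $A$, it has nonzero socle, and any socle element of $aA$ still satisfies $\mathfrak{m}\cdot (\cdot) = 0$ in $A$, so it lies in $\mathrm{soc}(A) = A_e$. Thus there is $b \in A$ with $ab \in A_e \setminus \{0\}$; projecting $b$ onto its degree-$(e-i)$ component shows $a\,b_{e-i} \neq 0$, so the multiplication map $A_i \to \mathrm{Hom}_{\mathbf{k}}(A_{e-i}, A_e)$ is injective. The symmetric argument gives injectivity of $A_{e-i} \to \mathrm{Hom}_{\mathbf{k}}(A_i, A_e)$, and finite-dimensionality then forces both maps to be isomorphisms, so the pairing is perfect. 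Conversely, for (2) $\Rightarrow$ (1), setting $i = 0$ gives $\dim A_e = \dim A_0 = 1$, and for $j < e$ any $f \in A_j$ killed by every $x_k$ annihilates all of $A_{e-j}$ (since $A$ is standard graded, $A_{e-j}$ is spanned by products of the $x_k$), whence $f = 0$ by the perfect pairing. Thus $\mathrm{soc}(A) = A_e$ is one-dimensional of degree $e$.

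For (1) $\Rightarrow$ (3), write $A = R/I$. The apolar pairing $R_j \times S_j \to \mathbf{k}$, $(g, G) \mapsto (g \circ G)|_{X=0}$, is perfect and identifies $(I^{-1})_j$ with the orthogonal of $I_j$, so $\dim (I^{-1})_j = \dim A_j$. In particular $(I^{-1})_e$ is one-dimensional; let $F$ span it. Then $I \subseteq \mathrm{Ann}(F)$ is automatic. For the reverse inclusion, given $g \in \mathrm{Ann}(F) \cap R_j$ with $j \leq e$, I would argue that for every $h \in R_{e-j}$ the product $hg$ lies in $I_e$: indeed $(hg) \circ F = h \circ (g \circ F) = 0$, and since $(I^{-1})_e = \mathbf{k}F$ the top-degree orthogonality forces $hg \in I_e$. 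But then the perfect pairing $A_j \times A_{e-j} \to A_e$ from (2) forces the class of $g$ in $A_j$ to be zero, i.e. $g \in I_j$. Degrees $j > e$ are trivial since $A_j = 0$.

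For (3) $\Rightarrow$ (1), I would invoke the standard identification $\mathrm{Ann}(F)^{-1} = R \circ F$, the principal $R$-submodule of $S$ generated by $F$, which gives $\dim A_j = \dim (R \circ F)_j$. In particular $A_j = 0$ for $j > e$ and $A_e$ is one-dimensional (spanned by the class of any $g \in R_e$ with $g \circ F$ a nonzero constant). For the socle, if $g \in A_j$ satisfies $x_k g = 0$ in $A$ for all $k$, then $(x_k g) \circ F = \partial_{X_k}(g \circ F) = 0$ for all $k$, so $g \circ F \in S_{e-j}$ is a constant; this forces either $j = e$ or $g \circ F = 0$, the latter meaning $g \in \mathrm{Ann}(F)$, i.e. $g = 0$ in $A$. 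Hence $\mathrm{soc}(A) = A_e$. The main obstacle I foresee is the reverse inclusion $\mathrm{Ann}(F) \subseteq I$ in the (1) $\Rightarrow$ (3) step, since that is the point at which the perfect pairing of (2) has to be fed back in to propagate orthogonality from the top degree down to every lower degree; the remaining verifications are essentially bookkeeping with Macaulay duality and the definition of the socle.
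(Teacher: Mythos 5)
Your proposal is correct, but there is nothing in the paper to compare it against: the paper states this proposition without proof, as a compilation of standard equivalent characterizations of Artinian Gorenstein algebras (the appendix only supplies proofs for Lemma A.2 and Proposition A.3). Your argument is the standard one and is sound: for (1)$\Rightarrow$(2) the socle of the submodule $aA$ placed inside $\mathrm{soc}(A)=A_e$ gives injectivity of $A_i\to\mathrm{Hom}_{\mathbf{k}}(A_{e-i},A_e)$, and the two injectivities force equal dimensions and hence a perfect pairing; for (1)$\Rightarrow$(3) the identification $(I^{-1})_j=(I_j)^{\perp}$ under apolarity gives the dual socle generator $F$, and your use of the pairing from (2) to push $hg\in I_e$ down to $g\in I_j$ is legitimate since you establish (1)$\Leftrightarrow$(2) first; for (3)$\Rightarrow$(1) the identity $\mathrm{Ann}(F)^{-1}=R\circ F$ and the derivative computation of the socle are standard. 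Two small points worth making explicit: in (2)$\Rightarrow$(1) you implicitly use the paper's convention that $A=A_0\oplus\cdots\oplus A_e$ is standard graded with $A_0=\mathbf{k}$ and no components above degree $e$ (otherwise condition (2) alone would not rule out $A_j\neq 0$ for $j>e$), and in (3)$\Rightarrow$(1) the step ``all partials of $g\circ F$ vanish, hence $g\circ F$ is constant, hence zero in positive degree'' uses characteristic zero, which is the paper's standing hypothesis.
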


The form $F$ such that $A\simeq \mathbf{k}[x_1,\dots, x_n]/(\mathrm{Ann}(F))$ is called a \emph{Macaulay dual generator} of the algebra $A$. 

The symmetry of an Artinian Gorenstein algebra now makes it easier to check if something is a max-rank element on the algebra.

\begin{lemma}\label{lem:inj_sufficient}
Let $f$ be a form of degree $k$ and $A$ an Artinian Gorenstein algebra with socle degree $e$. Then the map $\cdot f:A_i \to A_{i+k}$ has the same rank as $\cdot f: A_{e-(i+k)} \to A_{e-i}$. Moreover, if multiplication by $f$ is an injective map from $A_j$ to $A_{j+k}$ for the largest $j$ such that $\dim_{\mathrm{k}}(A_j)\leq \dim_{\mathrm{k}}(A_{j+k})$, then $f$ is a max-rank element on $A$.
\end{lemma}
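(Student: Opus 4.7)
For the rank equality, the plan is to exploit the perfect pairing $A_i \times A_{e-i} \to A_e \cong \mathbf{k}$, which identifies $A_{e-i}$ with the $\mathbf{k}$-linear dual of $A_i$. Under this identification, the multiplication maps $\cdot f : A_i \to A_{i+k}$ and $\cdot f : A_{e-(i+k)} \to A_{e-i}$ become mutual transposes, because for $a \in A_i$ and $b \in A_{e-(i+k)}$ the associativity $(fa)\cdot b = a \cdot (fb)$ inside $A_e$ is exactly the adjointness condition. Since a linear map and its transpose have the same rank, the two multiplications have equal rank.

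For the max-rank criterion, let $j$ be the largest index satisfying $\dim_{\mathbf{k}} A_j \leq \dim_{\mathbf{k}} A_{j+k}$, and assume $\cdot f : A_j \to A_{j+k}$ is injective. The first step in the plan is to prove injectivity on $A_i$ for every $i \leq j$ by an amplification argument: given $a \in A_i$ with $fa = 0$ and $a \neq 0$, I would use that in the standard-graded setting $A_{j-i} \cdot A_{e-j} = A_{e-i}$, so that the perfect pairing, which provides $c \in A_{e-i}$ with $ac \neq 0$, can be factored to yield $b \in A_{j-i}$ with $ab$ a nonzero element of $A_j$. Then $f(ab) = (fa)b = 0$ contradicts the hypothesized injectivity of $\cdot f$ on $A_j$, forcing $a = 0$.

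For $i > j$, maximality of $j$ forces $\dim A_i > \dim A_{i+k}$, so max rank here means surjectivity; by the rank equality from the first part, this is equivalent to injectivity of $\cdot f : A_{i'} \to A_{i'+k}$ at $i' = e - (i+k)$. A short bookkeeping step, observing that $j+1$ lies in the strict-decrease region and then applying the Gorenstein symmetry $\dim A_s = \dim A_{e-s}$ to that index, yields the bound $j \geq (e-k-1)/2$, from which $0 \leq i' \leq j$ follows, and the downward injectivity just established closes the argument.

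The main subtlety is the amplification step, which is where the standard-graded hypothesis—implicit throughout the paper via the Macaulay inverse-system presentation—is essential, supplying the identity $A_{j-i}\cdot A_{e-j} = A_{e-i}$ that lets the perfect pairing transfer nonvanishing from $A_{e-i}$ down to $A_{j-i}$; everything else is formal manipulation with duals of finite-dimensional vector spaces and Gorenstein symmetry.
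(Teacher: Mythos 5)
Your proposal is correct. The first half is essentially the paper's argument: the paper realizes the two multiplication maps as matrices $M$ and $M'$ and shows $M'$ equals $G^{-1}M^TH^T$ up to the invertible Gram matrices $G,H$ of the perfect pairings, which is exactly your coordinate-free statement that the two maps are mutual transposes under the identifications $A_{e-i}\cong (A_i)^*$; the adjointness $(fa)\cdot b = a\cdot(fb)$ is the same computation without bases. For the second half you take a genuinely different route. The paper sets $j^*$ to be the largest degree where $\cdot f$ is injective, reflects once to get surjectivity onto $A_{e-j^*}$, i.e.\ $(A/(f))_{e-j^*}=0$, uses that $A/(f)$ is standard graded so all higher graded pieces of $A/(f)$ vanish, and then reflects again to conclude injectivity in every degree at most $j^*$; the downward propagation thus happens implicitly, via upward vanishing in the quotient. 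You instead propagate injectivity downward inside $A$ itself: the amplification step, using the Gorenstein pairing to find $c\in A_{e-i}$ with $ac\neq 0$ and the standard-graded identity $A_{e-i}=A_{j-i}\cdot A_{e-j}$ to replace $c$ by some $b\in A_{j-i}$ with $ab\neq 0$ in $A_j$, is valid and is the classical argument (one could even get $b$ as a product of linear forms using only that the socle is concentrated in degree $e$). Your treatment of the degrees $i>j$ via one reflection plus the bound $j\geq (e-k-1)/2$, obtained from $\dim A_{j+1}>\dim A_{j+1+k}$ and Hilbert-function symmetry, is also correct; the only cosmetic imprecision is that $i'=e-i-k$ can be negative, in which case $A_{i+k}=0$ and surjectivity is trivial, so nothing is lost. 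Both proofs need the standard grading (the paper invokes ``all variables have degree one''), which you rightly flag as implicit in the lemma's statement; what your route buys is an explicit kernel-lifting mechanism and explicit index bookkeeping, while the paper's route is shorter because the quotient algebra absorbs the bookkeeping.
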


\begin{proof}
Since $A$ is a Gorenstein algebra, we know that the multiplication on $A$ gives a perfect pairing. That is, let $B_i=\{g_1,\dots, g_s\}$ and $\hat{B}_{e-i}=\{\hat{g}_1,\dots, \hat{g}_s\}$ be bases of $A_i$ and $A_{e-i}$ respectively and say that $F$ is a dual generator of $A$. Then $g_a\cdot \hat{g_b} = \lambda_{a,b}F$ for some constant $\lambda_{a,b}$ for any $a,b$ and the matrix $G=(\lambda_{a,b})_{a,b=1}^s$ is invertible. Similarly, let $H=(\lambda_{a,b}')_{a,b=1}^t$ be the maximal rank matrix coming from the perfect pairing between $B_{e-(i+k)}=\{h_1,\dots, h_t\}$ and $\hat{B}_{i+k}=\{\hat{h}_1,\dots, \hat{h}_t\}$, bases of $A_{e-(i+k)}$ and $A_{i+k}$ respectively.

Now, if $M$ is representing the map $\cdot f:A_i \to A_{i+k}$ with respect to the above bases, then the $j$:th column of $M$ is given by $(c_{1,j},\cdots, c_{t,j})^T$ where $fg_j = c_{1,j}\hat{h}_1+\cdots + c_{t,j}\hat{h}_t$. By definition of $H$, we also have that 
\[
\begin{pmatrix}
h_1fg_j \\
\vdots\\
h_tfg_j
\end{pmatrix} = H
\begin{pmatrix}
c_{1,j}F\\
\vdots \\
c_{t,j}F
\end{pmatrix}.
\]
Hence, using that $H$ is invertible and applying the above reasoning to all columns of $M$, we find that 
\[
MF=(c_{i,j}F)=H^{-1}
\begin{pmatrix}
fh_1g_1 & f h_1g_2 & \cdots & fh_1g_s\\
fh_2g_1 & f h_2g_2 & \cdots & fh_2g_s\\
\vdots & \vdots  & \ddots & \vdots\\
fh_tg_1 & f h_tg_2 & \cdots & fh_tg_s\\
\end{pmatrix}.
\]
Further, if we change our indexing from $i$ to $e-(i+k)$, then instead of a map from $A_i$ to $A_{i+k}$, we get one from $A_{e-(i+k)}$ to $A_{e-i}$, and from our set up, this case is handled exactly in the same way but with the roles of $g$ and $h$ swapped. Hence, if $M'$ is the matrix representing $\cdot f:A_{e-(i+k)} \to A_{e-i}$ in our chosen bases, then 
\[
M'F = G^{-1} \begin{pmatrix}
fh_1g_1 & f h_2g_1 & \cdots & fh_tg_1\\
fh_1g_2 & f h_2g_2 & \cdots & fh_tg_2\\
\vdots & \vdots  & \ddots & \vdots\\
fh_1g_s & f h_2g_s & \cdots & fh_tg_s\\
\end{pmatrix}
=G^{-1}(HMF)^T = G^{-1}M^TH^TF.
\]
Since multiplication by invertible matrices does not change the rank and the rank of the transpose of a matrix is the same as the rank of the matrix, we conclude that $\rank(M')=\rank(G^{-1}M^TH^T)=\rank(M)$ and that the map $\cdot f:A_i \to A_{i+k}$ has the same rank as $\cdot f: A_{e-(i+k)} \to A_{e-i}$.

For the final part of the lemma we now argue as follows. Since $\dim_{\mathrm{k}}(A_i)=\dim_{\mathrm{k}}(A_{e-i})$ for all $i$, the first part gives that it suffices to establish maximal rank in all degrees where injectivity is required. Now, let $j$ be the largest degree such that $\cdot f: A_j \to A_{j+k}$ is injective. The by the first part we have that $\cdot f: A_{e-(j+k)} \to A_{e-j}$ is surjective, so $(A/(f))_{e-j}=0$. Since all variables have degree one, this forces that $(A/(f))_{e-j+l}=0$ for all $l\geq 0$, giving that $\cdot f:A_{e-j+l-d}\to A_{e-j+l}$ is surjective for all $l\geq 0$. A final application of the first part then gives that $\cdot f:A_{j-l} \to A_{j-l+d}$ is injective for all $l\geq 0$ and hence $f$ is injective in all degrees where injectivity can be required.
\end{proof}

A well known fact about Gorenstein algebras is that their Hilbert series are symmetric. That is, if $A$ is an Artinian Gorenstein algebra with socle degree $e$ and $\HS(A;t)=\sum_{i=0}^ea_it^i$ is the Hilbert series of $A$, then $a_i=a_{e-i}$ for all $i$. We would now like to extend this notion of symmetry to series where the constant term might vanish. To this end, when considering a series $p(t)=a_nt^n + a_{n-1}t^{n-1} + \cdots + a_mt^m$, where $a_na_m\neq 0$, we say that $p$ is \emph{symmetric} if $a_{m+i}=a_{n-i}$ for all $i$. 

Another property of Hilbert series that has gathered a lot of interest is questions about unimodality. The series $\HS(A;t)=\sum_{i=0}^ea_it^i$ is \emph{unimodal} if the is an integer $s$ such that $$a_0\leq a_1 \leq \cdots \leq a_{s-1} \leq a_s \geq a_{s+1} \geq \cdots \geq a_{e-1} \geq a_e.$$
Note that not all Gorenstein algebras have unimodal Hilbert series, but all complete intersections do. It turns out that if $A$ is a Gorenstein algebra with unimodal Hilbert series, then the amount by which a form fails to be a max-rank element on $A$ always defines a symmetric series.

\begin{proposition}\label{prop:Failing_symmetric}
Let $A$ be an Artinian Gorenstein algebra with unimodal Hilbert series, $f$ a form of degree $k$, and consider the polynomial
\[
[(1-t^k) \HS(A;t)]
\]
where the brackets denotes truncation of the polynomial at the first non-positive coefficient.
Then the difference $\HS(A/(f);t)-[(1-t^k) \HS(A;t)]$ is symmetric.
\end{proposition}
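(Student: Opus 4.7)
My plan is to work at the level of coefficients and reduce everything to the single identity $d_j = d_{e+k-j}$, where $d_j$ is the coefficient of $t^j$ in the difference and $e$ denotes the socle degree of $A$. Writing $a_j = \dim_{\mathbf{k}} A_j$ (extended by zero outside $[0,e]$) and $r_i = \rank(\cdot f\colon A_i\to A_{i+k})$, we have $\HS(A/(f); t)_j = a_j - r_{j-k}$ and $(1-t^k)\HS(A;t) = \sum_j p_j t^j$ with $p_j = a_j - a_{j-k}$. The Gorenstein symmetry $a_j = a_{e-j}$ gives at once that $p_{e+k-j} = -p_j$, so the untruncated polynomial is antisymmetric about $(e+k)/2$.

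Next, I would combine unimodality of $\HS(A;t)$ with the Gorenstein symmetry to show that $p_j\geq 0$ for $j\leq (e+k)/2$ and $p_j\leq 0$ for $j\geq (e+k)/2$: for $j\le e/2$ the inequality $a_j\ge a_{j-k}$ is direct from the increasing half, while for $e/2 < j \le (e+k)/2$ one rewrites $a_j = a_{e-j}$ and observes that $j - k \le e - j \le e/2$, putting both terms back into the increasing half. Antisymmetry then takes care of the other half. Consequently the first index $j_0$ with $p_{j_0}\le 0$ satisfies $j_0\le \lceil (e+k)/2\rceil$, and the truncation is exactly the positive part
\[
[(1-t^k)\HS(A;t)] = \sum_{j<j_0}p_j\, t^j.
\]

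Finally, combining the Gorenstein identity $a_{e+k-j}=a_{j-k}$ with the rank symmetry $r_{e-j}=r_{j-k}$ from Lemma~\ref{lem:inj_sufficient}, the desired equality $d_j=d_{e+k-j}$ reduces to
\[
[(1-t^k)\HS(A;t)]_j - [(1-t^k)\HS(A;t)]_{e+k-j} = p_j.
\]
Three cases arise. When $j<j_0$ and $e+k-j\ge j_0$ this reads $p_j - 0 = p_j$; when $j\ge j_0$ and $e+k-j<j_0$ it reads $0 - p_{e+k-j} = p_j$ by antisymmetry of $p$; the remaining case $j<j_0$ and $e+k-j<j_0$ cannot occur, since $p_j>0$ forces $p_{e+k-j}<0$ and hence $e+k-j\ge j_0$. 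The main obstacle is the last "middle" case, $j\ge j_0$ and $e+k-j\ge j_0$: here both truncated values vanish, so the identity demands $p_j=0$. This is exactly where unimodality is indispensable — from $j\ge j_0$ we obtain $p_j\le 0$, while from $e+k-j\ge j_0$ together with $p_j = -p_{e+k-j}$ we obtain $p_j\ge 0$, forcing $p_j=0$. The symmetry $d_j=d_{e+k-j}$ then holds for every $j$, which is the claim.
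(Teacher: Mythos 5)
Your proposal follows essentially the same route as the paper: the two real inputs are the Gorenstein symmetry $a_j=a_{e-j}$ and the rank symmetry $r_{j-k}=r_{e-j}$ coming from Lemma \ref{lem:inj_sufficient}, with unimodality used to control the truncation. Your bookkeeping is cleaner than the paper's in places: the reduction of $d_j=d_{e+k-j}$ to the identity $[(1-t^k)\HS(A;t)]_j-[(1-t^k)\HS(A;t)]_{e+k-j}=p_j$, the antisymmetry $p_{e+k-j}=-p_j$, the sign analysis of $p$ on the two halves, and the first three cases are all correct.

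The middle case, however, has a genuine gap. You assert that $j\ge j_0$ forces $p_j\le 0$ (and likewise for $e+k-j\ge j_0$). What you have actually established is only that $p_m\ge 0$ for $m\le (e+k)/2$, that $p_m\le 0$ for $m\ge (e+k)/2$, and that $j_0$ is the \emph{first} index with $p_{j_0}\le 0$; none of this rules out $j_0<(e+k)/2$ with $p_{j_0}=0$ followed by $p_j>0$ for some $j_0<j<(e+k)/2$, and for such a $j$ the truncated coefficient is $0$ while $p_j>0$, so your middle-case identity would fail. This cannot be repaired using symmetry and unimodality of the sequence alone: the sequence $(1,1,1,2,2,2,1,1,1)$ with $k=2$ is symmetric and unimodal, yet $p=(1,1,0,1,1,0,-1,\dots)$, so $j_0=2$ while $p_3=p_4=1$. (That sequence is not a Gorenstein Hilbert function, but your step invokes nothing beyond symmetry and unimodality, so the inference as stated is invalid.) What is really needed is that, for the Hilbert functions in question, the coefficients of $(1-t^k)\HS(A;t)$ do not become positive again after the truncation point within the first half, equivalently $a_j=a_{j-k}$ for all $j_0\le j\le (e+k)/2$. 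This is immediate when $\HS(A;t)$ is strictly increasing up to the middle (as for the equigenerated monomial complete intersections with $n\ge 2$ to which the proposition is applied, and trivially when $n=1$), but it needs an argument, not just the words ``unimodal and symmetric''. In fairness, the paper's own proof passes over exactly the same point with a one-line appeal to unimodality when it identifies the coefficient of $t^{i+k}$ in the truncation with $\dim_{\mathbf{k}}(A_{i+k})-\dim_{\mathbf{k}}(A_i)$; you have isolated the right crux, but as written your justification of it is unsupported.
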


\begin{proof}
Say $A$ has socle degree $e$. We can assume that $k\leq e$ as otherwise the difference is the zero polynomial. Now, consider the exact sequences
\[
0 \to K_i \to A_i \xrightarrow{\cdot f} A_{i+k} \to (A/(f))_{i+k} \to 0 
\]
and
\[
0 \to K_{e-(i+k)} \to A_{e-(i+k)} \xrightarrow{\cdot f} A_{e-i} \to (A/(f))_{e-i} \to 0. 
\]
By Lemma \ref{lem:inj_sufficient}, we know that the map $\cdot f$ has the same rank in both cases, hence
\begin{equation}\label{eq:same_rank_eq}
\dim_{\mathbf{k}}(A_i) - \dim_{\mathbf{k}}(K_i) = \dim_{\mathbf{k}}(A_{e-(i+k)}) - \dim_{\mathbf{k}}(K_{e-(i+k)}).
\end{equation} 
Denote by $p(t)=\HS(A/(f);t)-[(1-t^k) \HS(A;t)]$. We are done if we can show that the coefficient of $t^{i+k}$ equals the coefficient of $t^{e-i}$ in $p$ for all $i$ satisfying that $i+k\leq e-i$. Next, using the short exact sequences, the coefficient of $t^{i+k}$ in $p(t)$ is
\begin{align*}
&\dim_{\mathbf{k}}(A/(f))_{i+k} - (\dim_{\mathbf{k}}(A_{i+k}) - \dim_{\mathbf{k}}(A_{i}))\\ 
=& (\dim_{\mathbf{k}}(A_{i+k}) - \dim_{\mathbf{k}}(A_{i}) + \dim_{\mathbf{k}}(K_i)) - (\dim_{\mathbf{k}}(A_{i+k}) - \dim_{\mathbf{k}}(A_{i})) \\
=& \dim_{\mathbf{k}}(K_i).
\end{align*}
Here we used the unimodality of the Hilbert series of $A$ to know the coefficient of $t^{i+k}$ in $[(1-t^k) \HS(A;t)]$. By using the short exact sequences again and \eqref{eq:same_rank_eq}, the coefficient of $t^{e-i}$ in $p(t)$ is
\begin{align*}
&\dim_{\mathbf{k}}(A/(f))_{e-i} - 0 \\
=& \dim_{\mathbf{k}}(A_{e-i}) - \dim_{\mathbf{k}}(A_{e-(i+k)}) + \dim_{\mathbf{k}}(K_{e-(i+k)}) \\
=& \dim_{\mathbf{k}}(A_{e-i}) -(\dim_{\mathbf{k}}(A_i) - \dim_{\mathbf{k}}(K_i)) \\
=&\dim_{\mathbf{k}}(K_i),
\end{align*}
where, in the final equality, we used that $\dim_{\mathbf{k}}(A_i)=\dim_{\mathbf{k}}(A_{e-i})$ since $A$ is an Artinian Gorenstein algebra with socle degree $e$. Hence the coefficients of $t^{i+k}$ and $t^{e-i}$ in $p(t)$ are equal for any $i$, showing that $p(t)$ is symmetric.
\end{proof}

\end{document}